\def\hat{\widehat}
\def\wtilde{\widetilde}
\def\D{\mathcal D}
\def\bra{\langle}
\def\cet{\rangle}
\newtheorem*{theorem*}{Theorem}
\def\FM{\mathcal A}
\def \p {\partial}
\def \ba {\begin {eqnarray*} }	
\def \ea {\end {eqnarray*} }
\def \beq {\begin {eqnarray}}
\def \eeq {\end {eqnarray}}
\newcommand{\sijoitus}[2]%
{\operatornamewithlimits{\Bigl/}_{\!\!\!#1}^{\,#2}}
\title[Regularization for wave equation]{Regularization strategy for inverse problem for 1+1 dimensional wave equation}
\author{Jussi Korpela, Matti Lassas and Lauri Oksanen}
\date{August 31, 2015}
\begin{document}
\maketitle
\begin{abstract}
An inverse boundary value problem for a 1+1 dimensional wave equation
with wave speed $c(x)$ is considered. We give a regularisation
strategy for inverting the map $\mathcal A:c\mapsto \Lambda,$ where
$\Lambda$ is the hyperbolic Neumann-to-Dirichlet map corresponding to the wave speed $c$.
More precisely, we consider the case when we are given a perturbation of the Neumann-to-Dirichlet map
$\tilde \Lambda=\Lambda +\mathcal E $, where $\mathcal E$ corresponds to the measurement errors, and reconstruct an approximate wave speed $\tilde c$.
We emphasize that $\tilde \Lambda$ may not not be in the range of the map $\mathcal A$.
We show that the
reconstructed wave speed $\tilde c$ satisfies $\| \tilde c-c\|_{L^\infty}<C \|E\|^{1/18}$.
Our regularization strategy is based on a new formula to compute $c$ from $\Lambda$.
\smallskip

\noindent
\textbf{Keywords:} Inverse problem, regularization theory, wave equation.
\end{abstract}

\section{Introduction}
We consider an inverse boundary value problem for the wave equation $$(\p_t^2 - c(x)^2 \p_x^2) u(t,x) = 0.$$
We introduce a regularization strategy to recover the sound speed $c(x)$ by using the knowledge of perturbed Neumann-to-Dirichlet map $\wtilde\Lambda$.
Our approach is based on the Boundary Control method \cite{Belishev1987, Belishev1992, Tataru1995}.

A variant of the Boundary Control method, called the iterative time-reversal control method, was introduced in \cite{Bingham2008}.
The method was later modified in \cite{Dahl2009}
to focus the energy of a wave at a fixed time 
and in \cite{Lauri2013} 
to solve an inverse obstacle problem for the wave equation. 
Here we introduce yet another modification of the iterative time-reversal control method
that is tailored for the 1+1 dimensional wave equation.

Classical regularization theory is explained in \cite{Engl1996}. Iterative regularization
of both linear and nonlinear inverse problems and convergence rates are discussed
in Hilbert space setting in \cite{Bissantz2004, Hanke1997, Hohage2008, LuPeRa2007, Mathe2008} and in Banach space setting in \cite{Hofmann2007, Kaltenbacher2006, Kaltenbacher2008, Kirsch1996, Ramlau2008, Ramlau2006, Resmerita2005}. Our new results give a direct regularization method for the
nonlinear inverse problem for the wave equation. The result contains an
explicit (but not necessarily optimal) convergence rate.

By direct methods for non-linear problems we mean explicit construction of a non-linear map solving the problem
without resorting to a local optimisation method. In our case the map is given by (\ref{Rstategia}).
The advantage of direct approaches
is that they do not suffer from the possibility that the algorithm converges to a local minimum.
In particular, they do not require a priori knowledge
that the solution is in a small neighbourhood of a given function.
There are currently only few regularized direct methods for non-linear inverse problems.
An example is a regularisation algorithm for
the inverse problem for the conductivity equation in \cite{KLMS2009}.
 Also, a direct regularized inversion for blind
deconvolution is presented in \cite{Justen2006}.
\subsection{Statement of the results}
 Let $C_0,C_1,L,m>0$ and define the space of velocity functions
\begin{align}
\label{nopeudet}
\mathcal{D}(\mathcal A):=&\{c\in L^\infty (M);
C_0\le c(x)\le C_1 ,\\\nonumber &\quad\norm{c}_{C^2(M)}\le m,\, c-1 \in C_0^2(0,L) \},
\end{align}
where we denote by $M$ the half axis $[0,\infty)$.
Let 
\begin{align}
\label{aikapoika}
T> \frac{L}{C_0}.
\end{align}
For $c\in\mathcal{D}(\mathcal A)$ and 
$f\in L^2(0,2T)$, the boundary value problem 
\begin{align}
\label{dartwader}
&(\p_t^2 - c(x)^2 \p_x^2) u(t,x) = 0 \quad \text{in $(0,2T) \times M$},
\\\nonumber
&\p_x u(t,0) = f(t),
\\\nonumber
&u|_{t = 0} = \p_t u|_{t=0} = 0,
\end{align}
has a unique solution $u^f\in H^1((0,2T) \times M)$. Using this solution we define the Neumann-to-Dirichlet operator
\begin{align}
\label{lam}
 \Lambda : L^{2}(0,2T) \to L^{2}(0,2T),\quad
 \Lambda f=u^f|_{x=0}.
\end{align}
We define for a Banach space E 
\begin{align*}
\mathcal{L} (E):=\{A:E \to E;
A\text{ is linear and continuous}\}.
\end{align*}
Let $X=L^\infty (M)$ and $Y=\mathcal{L} (L^2(0,2T))$.
We define the direct map 
\begin{align}
\label{apina}
\mathcal A:\mathcal{D}(\mathcal A)\subset X\to Y,\quad 
 \mathcal A(c)= \Lambda. 
\end{align}
We show in Appendix A, Theorem \ref{tomoffinland}, that the maps (\ref{lam}) and (\ref{apina}) are continuous. Here the range $Ran (\mathcal{A})=\mathcal A(\mathcal{D}(\mathcal A))$ and the domain $\mathcal{D}(\mathcal A)$ are equipped with the topologies of $Y$ and $X$, respectively.

We consider the inverse problem to recover the velocity function $c$ by using the boundary measurements $\Lambda$. It is well-known that $\mathcal A$ is invertible. Let us record the following:
\begin{theorem}The inverse map
\label{Amiinus}
\begin{align*}
\mathcal A^{-1}:Ran (\mathcal{A})\subset Y\to \mathcal{D}(\mathcal A)\subset X,\quad
\mathcal A^{-1}(\Lambda)= c, 
 \end{align*}
 is continuous.
 \end{theorem}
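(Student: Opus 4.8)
The plan is to establish continuity of $\mathcal{A}^{-1}$ by combining a compactness argument with injectivity of the forward map. The key obstacle is that $\mathcal{A}^{-1}$ is defined only on the range $Ran(\mathcal{A})$, which is not obviously closed, so I cannot directly invoke a standard open-mapping-type theorem. Instead I would exploit the structure of the domain $\mathcal{D}(\mathcal{A})$, which carries uniform $C^2$ bounds via the constraint $\norm{c}_{C^2(M)} \le m$ together with $C_0 \le c \le C_1$.

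First I would argue that $\mathcal{D}(\mathcal{A})$ is compact in the $X = L^\infty(M)$ topology. Since every $c \in \mathcal{D}(\mathcal{A})$ satisfies $c - 1 \in C_0^2(0,L)$, the functions are supported (as perturbations of $1$) in the fixed compact interval $[0,L]$, and the uniform $C^2$ bound gives equicontinuity of $c$ and $c'$; by Arzel\`a--Ascoli any sequence has a subsequence converging in $C^1$, hence in $L^\infty(M)$, to a limit that again lies in $\mathcal{D}(\mathcal{A})$ by lower semicontinuity of the bounds. Thus $\mathcal{D}(\mathcal{A})$ is sequentially compact in $X$.

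Next I would use the continuity of the forward map $\mathcal{A}$, established in Theorem \ref{tomoffinland}, together with the known injectivity of $\mathcal{A}$ (the well-posedness of the inverse problem, which is the classical identifiability result underlying the Boundary Control method). The standard topological fact is that a continuous bijection from a compact space to a Hausdorff space is a homeomorphism. Here $Y = \mathcal{L}(L^2(0,2T))$ is a Banach space, hence Hausdorff, so $\mathcal{A}: \mathcal{D}(\mathcal{A}) \to Ran(\mathcal{A})$ is a continuous bijection from a compact space onto its image, and therefore its inverse $\mathcal{A}^{-1}$ is continuous.

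Concretely, to make this self-contained I would prove continuity by contradiction: suppose $\Lambda_n = \mathcal{A}(c_n) \to \Lambda = \mathcal{A}(c)$ in $Y$ but $\norm{c_n - c}_{L^\infty} \ge \delta > 0$. By compactness of $\mathcal{D}(\mathcal{A})$, pass to a subsequence with $c_n \to \tilde c$ in $X$ for some $\tilde c \in \mathcal{D}(\mathcal{A})$; then $\norm{\tilde c - c}_{L^\infty} \ge \delta$ so $\tilde c \ne c$. By continuity of $\mathcal{A}$ we get $\mathcal{A}(\tilde c) = \lim \mathcal{A}(c_n) = \Lambda = \mathcal{A}(c)$, contradicting injectivity. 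The subtle point to verify carefully is that the limit $\tilde c$ genuinely remains in $\mathcal{D}(\mathcal{A})$, in particular that the closed constraints $C_0 \le c \le C_1$ and the $C^2$ bound are preserved under $C^1$-limits and that $\tilde c - 1 \in C_0^2(0,L)$; this, and invoking the correct form of the injectivity result for the 1+1 dimensional case, are where the real care is required.
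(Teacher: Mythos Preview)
Your approach is correct and, for the continuity statement itself, coincides with the paper's: both use that $\mathcal A$ is continuous (Theorem~\ref{tomoffinland}) and that $\mathcal D(\mathcal A)$ is (relatively) compact in $X$, then conclude by the standard fact that a continuous injection from a compact space to a Hausdorff space has continuous inverse. Your sequential contradiction argument is exactly how the paper's one-line remark ``Since the direct map $\mathcal A$ is continuous and $\mathcal D(\mathcal A)$ is relatively compact in $X$, we see that $\mathcal A^{-1}$ is continuous'' would be unpacked.

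The difference lies in how injectivity is handled. You invoke it as a known identifiability result; the paper instead devotes essentially all of Section~2 to deriving an explicit reconstruction formula $c(x)=v(\chi^{-1}(x))$ from $\Lambda$ via Theorem~\ref{thm_minimization} and equations (\ref{pikkuvee}), (\ref{cee}), (\ref{possu}). This constructive route simultaneously proves injectivity and, crucially, furnishes the building blocks for the regularization operator $\mathcal R_\alpha$ in Section~3. So while your argument suffices for Theorem~\ref{Amiinus} in isolation, the paper's proof is doing double duty: the explicit formula is the real content, and continuity is then dispatched by the same compactness remark you give. Your caution about whether the limit $\tilde c$ stays in $\mathcal D(\mathcal A)$ (closure under $C^1$-limits of the $C^2$-bound) is a fair point the paper glosses over by saying only ``relatively compact''; for the argument one really needs either that $\mathcal D(\mathcal A)$ is closed or that $\mathcal A$ extends injectively to its closure.
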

For the convenience of the reader we give a proof of Theorem \ref{Amiinus} in Section 2, where we also give a new formula to compute $c$ from $\Lambda$. Our main result concerns perturbations of the Neumann-to-Dirichlet operator of the form  
\begin{align}
\label{virhemitta}
&\wtilde{\Lambda}=\Lambda+\mathcal{E}, 
\end{align}
where $\mathcal{E}\in Y$
models the measurement error. 
We assume that $\norm{\mathcal{E}}_Y \le \epsilon,$
where $\epsilon>0$ is known. In this situation we can not use the map $\mathcal A^{-1}$ to calculate function $c$ since $\wtilde{\Lambda}$ may not be in the range $\mathcal R (\mathcal{A})$. We recall the definition of a regularization strategy, see e.g. \cite{Engl1996} and \cite{Kirsch1996}. 
\begin{definition}
Let X,Y be Banach spaces and $\mathcal{D}(\mathcal A)\subset X$. Let $\mathcal A:\mathcal{D}(\mathcal A)\to Y$ be continuous mapping. Let $\alpha_0\in (0,\infty]$. A family of continuous maps $\mathcal R_{\alpha}:Y\to X$ parametrized by $0<\alpha<\alpha_0$ is called a regularization strategy if
\begin{align*}
\lim_{ \alpha\to 0} \mathcal R_{\alpha}(\mathcal A(c))=c
\end{align*}
for every $c\in\mathcal{D}(\mathcal A)$.
A regularization strategy is called admissible, if the parameter $\alpha$ is chosen as a function of the noise level $\epsilon$ so that $\lim_{\epsilon \to 0}$ $\alpha(\epsilon) =0$ and for every $c\in\mathcal{D}(\mathcal A)$ 
\begin{align*}
\lim_{\epsilon \to 0}\sup \Big\{\norm{\mathcal R_{\alpha(\epsilon)}\wtilde{\Lambda}-c}_X:
\wtilde{\Lambda}\in Y,\,\norm{\wtilde{\Lambda}-\mathcal A(c)}_Y \le \epsilon \Big\}= 0.
\end{align*}
\end{definition}

  Figure \ref{fig:nonlinearregul} gives us a schematic illustration of regularization.
 \begin{figure}[H]
 \begin{picture}(300,145)
 \put(0,0){\includegraphics[width=12cm]{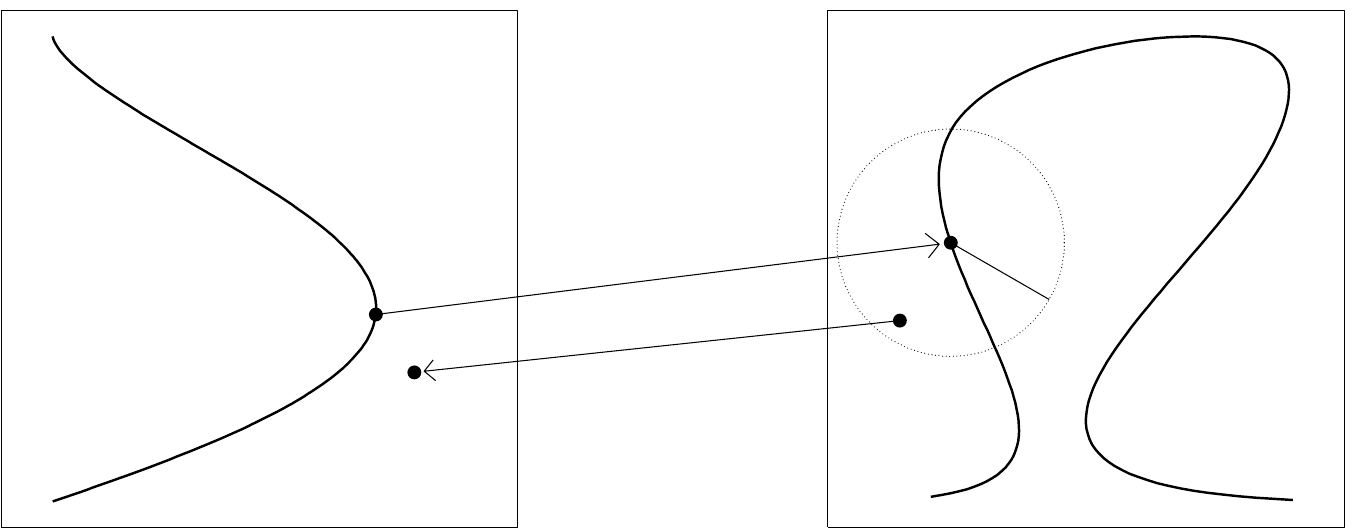}}
 \put(0,135){\small Model space}
 \put(207,135){\small \small Data space}
 \put(115,117){\small $X$}
 \put(215,117){\small $Y$}
 \put(85,50){\small $c$}
 \put(160,68){\small $\FM$}
 \put(260,60){\small $\epsilon$}
 \put(160,33){\small $\mathcal R_{\alpha(\epsilon)}$}
 \put(240,75){\small $\FM(c)$}
 \put(290,18){\small $\FM(\D(\FM))$}
 \put(10,18){\small $\D(\FM)$}
 \put(230,50){\small $\wtilde \Lambda$}
 \put(88,28){\small $\mathcal R_{\alpha(\epsilon)}(\wtilde \Lambda)$}
 \end{picture}
 \caption{\label{fig:nonlinearregul}\index{regularized inversion}The idea of regularization is to construct a family $\mathcal R_{\alpha(\epsilon)}$ of continuous maps from the data space $Y$ to the model space $X$ in such a way that $c$ can be approximately recovered from noisy data $\wtilde \Lambda$. For a smaller noise level $\epsilon$ the approximation $\mathcal R_{\alpha(\epsilon)}(\wtilde \Lambda)$ is closer to $c$. More details and a similar figure in a more general setting can be found in \cite[Fig. 11.5]{mueller2012linear}.}
 \end{figure}
The main result is presented next and says that we have an admissible regularization strategy inverting $\mathcal A$. 
\begin{theorem}
\label{kaiken_teoria}
There exists an admissible regularization strategy  $\mathcal R_{\alpha}$ with the choice of parameter 
\begin{align*}
\alpha(\epsilon)=2^{\frac{13}{9}}T^{\frac{4}{9}}\epsilon^{\frac{4}{9}}
\end{align*}
that satisfies the following: For every $c\in\mathcal{D}(\mathcal A)$
there is $\epsilon_0$, $C>0$ such that
\begin{align*} 
\sup\Big\{\norm{\mathcal R_{\alpha(\epsilon)}\wtilde{\Lambda}-c}_X:
\wtilde{\Lambda}\in Y,\,\norm{\mathcal A(c)-\wtilde{\Lambda}}_Y \le \epsilon \Big\}\le
C\epsilon ^{\frac{1}{18}},
\end{align*}
for all $\epsilon \in (0,\epsilon_0).$ 
\end{theorem}
We will give explicit choices of $\mathcal R_{\alpha}$ and $\epsilon_0$, see (\ref{epsilonnolla}) and (\ref{Rstategia}) below.
\subsection{Previous literature}
From the point of view of uniqueness questions, the inverse problem
for the 1+1 dimensional wave equation is equivalent with the one
dimensional inverse boundary spectral problem. The latter problem was
thoroughly studied in 1950s  \cite{GeLe1951, Krein1951, Marchenko1950} and we refer to \cite[pp.\ 65-67]{Kabanikhin2005} for a historical overview. In
1960s Blagove{\v{s}}{\v{c}}enski{\u\i} \cite{Blagovevsvcenskiui1969, Blagovevsvcenskiui1971} developed an
approach to solve the inverse problem for the 1+1 dimensional wave
equation without reducing the problem to the inverse boundary spectral
problem. This and later dynamical methods have the advantage over
spectral methods that they require data only on a finite time
interval.

The method in the present paper is a variant of the Boundary Control
 method  that was pioneered by M. Belishev \cite{Belishev1987}
and developed by  M. Belishev and Y. Kurylev \cite{BeKu1987,Belishev1992} in late 80s and early 90s.
Of crucial importance for the method was the result of D.\ Tataru
\cite{Tataru1995} concerning
a Holmgren-type uniqueness theorem for non-analytic coefficients.
The Boundary Control method for multidimensional inverse problems has been summarized in \cite{Belishev1997, KKL2001},
and considered for 1+1 dimensional scalar problems in \cite{BeKa1989, BeRFi1994} and for  multidimensional scalar problems
in \cite{Katchalov1998, KKLM2004, Kury1995, LaOk2014, LaOk2014b}
For systems it has been considered in \cite{Kurylev2009, Kurylev2006}.
Stability results for the method have been considered in \cite{Anderson2004}
and \cite{Katsuda2007}.

The inverse problem for the wave equation can be solved also by using complex geometrical optics solutions. These solutions were developed in the context of elliptic
inverse boundary value problems \cite{Sylvester1987}, and in \cite{Nachman1988} they were employed to solve an inverse
boundary spectral problem. Local stability results can be proven using
(real) geometrical optics solutions \cite{Bellassoued2011, StUh1998, Stefanov2011}, and in \cite{Liu2012} a local stability result was
proved by using ideas from the Boundary Control method together with complex
geometrical optics solutions. Finally we mention the important method
based on Carleman estimates \cite{Bukhgeuim1981} that can be used to show stability
results when the initial data for the wave equation is non-vanishing.
\section{Modification of the iterative time-reversal control method}
In this section we prove Theorem \ref{Amiinus} in such a way that we can  utilize the proof to construct a regularization stratezy as in Theorem \ref{kaiken_teoria}. 
Let $\Lambda$ be as defined in (\ref{lam}). Let $r\in [0,T]$. We define linear operators in $Y$ by
\begin{align}
\label{operaattorit}
&J f(t) = \frac{1}{2} \int_0^{2 T} 1_\triangle (t,s)f(s) ds,
\\\nonumber& R f(t) = f(2 T - t), \quad
 K = J \Lambda - R \Lambda R J,
\\\nonumber& B f (t) = 1_{(0,T)}(t) \int_t^T f(s) ds,\quad
 P_r f(t) = 1_{(0,r)}(t)f(t),
\end{align}
where 
$$
1_{\triangle} (t,s) = \begin{cases}
1, & t + s \le 2 T$ \text{and} $s > t > 0,
\\
0, & \text{otherwise}
\end{cases} 
$$
and
$$
1_{(0,r)}(t)= \begin{cases}
1, & t\in(0,r),
\\
0, & \text{otherwise}.
\end{cases} 
$$
Let $f\in L^2(0,2T)$. Using solution $u^f\in H^1((0,2T) \times M)$ of (\ref{dartwader}) we define 
\begin{align}
\label{nasu}
 U_{T} : L^{2}(0,2T)\mapsto H^1(M),\quad 
U_{T}f=u^f(T).
\end{align}
Let us denote $dV = c^{-2} dx$ and recall the Blagovestchenskii identities 
\begin{align}
\label{Blago_harmonic}
&\bra u^f(T), 1\cet _{L^2(M;dV)} 
= \bra f, B 1\cet_{L^2(0, 2T)},
\\\nonumber& \bra u^f(T), u^h(T)\cet_{L^2(M;dV)} 
= \bra f, K h\cet_{L^2(0, 2T)}.
\end{align}
The identities (\ref{Blago_harmonic}) originate from the work by Blagovestchenskii \cite{Blagovevsvcenskiui1966} and their proofs can be found e.g. in \cite{Bingham2008}.
We define the domain of influence
\begin{align}
\label{Mr}
M(r) = \{ x \in M; d(x, 0) \le r \},
 \end{align}
where $d(x, 0) = \int_0^x \frac{1}{c(t)} dt$.
We use the following result that is closely related to \cite{Bingham2008}
\begin{theorem}
\label{thm_minimization}
Let $r \in [0, T]$ and $\alpha > 0$. Let $K,B$, and $P_r$ be as defined in (\ref{operaattorit}). Let us define
\begin{equation}
\label{muumipappa}
S_r=\{f \in L^2(0,2T)\,:\,\supp(f) \subset [T - r, T]\}.
\end{equation}
Then the regularized minimization problem 
\begin{equation*}
\min_{f \in S_r} \ll (\bra f, K f\cet _{L^2(0, 2T)} - 2\bra f, B1\cet _{L^2(0, 2T)} + \alpha \norm{f}_{L^2(0, 2T)}^2 \rr),
\end{equation*}
has unique minimizer 
\begin{equation}
\label{minimizer}
 f_{\alpha,r} = (P_rKP_r + \alpha)^{-1}P_r B1
\end{equation}
and the map $r\mapsto f_{\alpha,r}$
is continuous $[0,T]\to L^2(0, 2T)$.
Moreover $u^{f_{\alpha,r}}(T)$ converges to the indicator function of the 
domain of influence,
\begin{align}
&\lim_{\alpha\to 0}\norm{u^{f_{\alpha,r}}(T) - 1_{M(r)}}_{L^2(M;dV)} =0.
\end{align}
\end{theorem}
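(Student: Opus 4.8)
The plan is to identify the regularized functional, via the Blagovestchenskii identities, with a Tikhonov functional for approximating the indicator $1_{M(r)}$ by reachable waves, and then to run the standard regularization argument. First I would rewrite the quadratic form using (\ref{Blago_harmonic}). For $f\in S_r$ the support condition $\supp(f)\subset[T-r,T]$ together with finite speed of propagation for (\ref{dartwader}) forces $u^f(T)$ to be supported in the domain of influence $M(r)$; consequently $\bra u^f(T),1\cet_{L^2(M;dV)}=\bra u^f(T),1_{M(r)}\cet_{L^2(M;dV)}$ (the pairing with $1$ is finite since $u^f(T)$ has compact support). With the two identities in (\ref{Blago_harmonic}) this gives, for all $f\in S_r$,
\[
\bra f,Kf\cet-2\bra f,B1\cet+\alpha\norm{f}^2=\norm{u^f(T)-1_{M(r)}}_{L^2(M;dV)}^2-\norm{1_{M(r)}}_{L^2(M;dV)}^2+\alpha\norm{f}^2,
\]
so minimizing the stated functional over $S_r$ is the same as minimizing $\norm{u^f(T)-1_{M(r)}}_{L^2(M;dV)}^2+\alpha\norm{f}^2$.

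For existence and uniqueness I would observe that $K$ is self-adjoint and positive: the second identity in (\ref{Blago_harmonic}) yields $\bra f,Kh\cet=\bra u^f(T),u^h(T)\cet_{L^2(M;dV)}$, hence $K=K^*$ and $\bra f,Kf\cet=\norm{u^f(T)}_{L^2(M;dV)}^2\ge0$. On the closed subspace $S_r$ the functional is therefore a coercive, strictly convex quadratic (its second variation is bounded below by $2\alpha$), so it has a unique minimizer. Testing the Euler--Lagrange equation against all $h\in S_r$ gives the normal equation $P_r(K+\alpha)f=P_rB1$ with $f\in S_r$, where $P_r$ is the orthogonal projection onto $S_r$; since $P_rKP_r+\alpha\ge\alpha>0$ is boundedly invertible, this is precisely (\ref{minimizer}).

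Continuity of $r\mapsto f_{\alpha,r}$ rests on the compactness of $K$. The operator $J$ has kernel $\tfrac12 1_\triangle\in L^2((0,2T)^2)$, so it is Hilbert--Schmidt, and since $\Lambda,R$ are bounded, $K=J\Lambda-R\Lambda RJ$ is compact. The projections $P_r$ are strongly continuous in $r$ (for fixed $g$, $\norm{(P_r-P_{r_0})g}\to0$ by dominated convergence), and strong convergence composed with a compact operator is norm convergence; hence $r\mapsto P_rKP_r$ is norm continuous. As all these operators dominate $\alpha I$, the resolvents $(P_rKP_r+\alpha)^{-1}$ are norm continuous in $r$, while $r\mapsto P_rB1$ is continuous in $L^2(0,2T)$, so $r\mapsto f_{\alpha,r}$ is continuous.

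Finally, for the convergence $u^{f_{\alpha,r}}(T)\to1_{M(r)}$, write $m(\alpha)$ for the attained minimum of $\norm{u^f(T)-1_{M(r)}}_{L^2(M;dV)}^2+\alpha\norm{f}^2$ over $S_r$; by the reduction it suffices to prove $m(\alpha)\to0$. \emph{This is the main obstacle, and it hinges on approximate controllability}: the reachable set $\{u^f(T):f\in S_r\}$ is dense in $L^2(M(r);dV)$. Granting this, for any $\delta>0$ choose $g\in S_r$ with $\norm{u^g(T)-1_{M(r)}}_{L^2(M;dV)}<\delta$, so that $m(\alpha)\le\delta^2+\alpha\norm{g}^2$ and hence $\limsup_{\alpha\to0}m(\alpha)\le\delta^2$; letting $\delta\to0$ gives $m(\alpha)\to0$ and therefore $\norm{u^{f_{\alpha,r}}(T)-1_{M(r)}}_{L^2(M;dV)}\to0$. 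The density statement is the substantive input: an approximate controllability result for the $1+1$ wave equation that follows from unique continuation (Tataru's theorem, or an explicit characteristics argument in one dimension), and it is the fact closely related to \cite{Bingham2008} invoked before the theorem.
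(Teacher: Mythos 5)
Your proposal is correct and follows the same overall strategy as the paper: rewrite the functional via the Blagovestchenskii identities and finite speed of propagation as a Tikhonov functional for approximating $1_{M(r)}$ by reachable waves, derive the normal equation, and obtain the $\alpha\to 0$ limit from density of the reachable set $\{u^f(T): f\in S_r\}$ in $L^2(M(r))$ (Tataru's unique continuation), which is exactly the substantive input the paper invokes. The one step where you genuinely diverge is existence and uniqueness of the minimizer: you observe that $\bra f,Kf\cet=\norm{u^f(T)}_{L^2(M;dV)}^2\ge 0$, so that $P_rKP_r+\alpha$ is a boundedly invertible positive operator and the functional is a strictly convex, coercive quadratic with unique minimizer given by (\ref{minimizer}); the paper instead first runs the direct method of the calculus of variations (minimizing sequence, weak compactness, compactness of $U_T:L^2(0,2T)\to L^2(M)$ via the $H^1$ bound of Theorem \ref{tomoffinland}, weak lower semicontinuity) and only afterwards derives the Euler--Lagrange equation and invokes Lax--Milgram. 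Your shortcut is more elementary and loses nothing, since the coercivity is needed for the inversion anyway. You also supply an argument for the continuity of $r\mapsto f_{\alpha,r}$ (compactness of $K$, which you justify by noting $J$ is Hilbert--Schmidt, plus strong continuity of the self-adjoint projections $P_r$ upgrades $r\mapsto P_rKP_r$ to norm continuity, hence norm continuity of the resolvents); the paper's proof of the theorem is silent on this claim and only establishes it later, in Proposition \ref{K}, via singular value decomposition and finite-rank truncation --- an argument equivalent in substance to yours.
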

For the convenience of the reader we give a proof.
\begin{proof}[Proof of Theorem \ref{thm_minimization}.]
Let $\alpha>0$ and let $f\in S_r$. We define the energy function 
\begin{equation}
\label{appe1}
E(f) := \bra f, Kf\cet _{L^2(0,2T)}  - 2 \bra f, B1\cet _{L^2(0,2T)} + \alpha \norm{f}^2_{L^2(0,2T)}.
\end{equation}
The finite speed of wave propagation
implies $\supp(u^f(T)) \subset M(r)$. Using (\ref{Blago_harmonic}) we can write 
\begin{equation}
\label{eq:energy_functional}
E(f) = \norm{u^{f}(T) - 1_{M(r)}}_{L^2(M; dV)}^2 - \norm{1_{M(r)}}_{L^2(M; dV)}^2 + \alpha \norm{f}^2_{L^2(0,2T))}.
\end{equation}
Let $(f_j)_{j=1}^\infty \subset S_r$ be such that 
\begin{equation*}
\lim_{j \to \infty} E(f_j) = \inf_{f \in S_r} E(f).
\end{equation*}
Then 
\begin{equation*}
\alpha \norm{f_j}_{L^2(0,2T))} \le E(f_j) + \norm{1_{M(r)}}_{L^2(M; dV)}^2,
\end{equation*}
and we see that $(f_j)_{j=1}^\infty$ is bounded in $S_r$.
As $S_r$ is a Hilbert space,
there is a subsequence of $(f_j)_{j=1}^\infty$ converging weakly in $S_r$.
Let us denote the limit by $f_\infty \in S_r$ and the subsequence still by $(f_j)_{j=1}^\infty$. 

By Theorem \ref{tomoffinland} in Appendix A below, the map $U_T:f \mapsto u^f(T)$ is bounded
\begin{equation*}
U_T:L^2( 0, 2T) \to H^{1}(M).
\end{equation*}
The embedding $I:H^{1}(M)\hookrightarrow L^2(M)$ is compact
and thus $U_T:f \mapsto u^f(T)$ is a compact operator 
\begin{equation*}
U_T:L^2(0, 2T) \to L^2(M).
\end{equation*}
Hence we have a subsequence $(f_j)_{j=1}^\infty$ for which $u^{f_j}(T) \to u^{f_\infty}(T)$ in $L^2(M)$ as $j \to \infty$. Moreover, the weak convergence implies 
\begin{equation*}
\norm{f_\infty}_{L^2(0,2T))} \le \liminf_{j \to \infty} \norm{f_j}_{L^2(0,2T)}.
\end{equation*}
Hence
\begin{align*}
&E(f_\infty) = \lim_{j \to \infty} \norm{u^{f_j}(T) - 1_{M(r)}}_{L^2(M; dV)}^2 - \norm{1_{M(r)}}_{L^2(M; dV)}^2 + \alpha \norm{f_\infty}^2_{L^2(0,2T)}
\\\nonumber&\le \lim_{j \to \infty} \norm{u^{f_j}(T) - 1_{M(r)}}_{L^2(M; dV)}^2 - \norm{1_{M(r)}}_{L^2(M; dV)}^2 + \alpha 
\liminf_{j \to \infty} \norm{f_j}^2_{L^2(0,2T)}
\\\nonumber&= \liminf_{j \to \infty} E(f_j) = \inf_{f \in S_r} E(f),
\end{align*}
and thus $f_\infty \in S_r$ is a minimizer for (\ref{appe1}). We denote by $D_h$ the Fr\'echet derivative to direction $h$. If
\begin{equation*}
0 = D_h E(f) = 2  \bra h, P_rKP_r f\cet_{L^2(0, 2T)} - 2 \bra h, P_rB 1\cet_{L^2(0, 2T)} + 2 \alpha \bra h, f\cet_{L^2(0, 2T)},
\end{equation*}
for all $h\in L^2(0,2T)$, then
\begin{equation*}
(P_rKP_r + \alpha) f = P_r B 1.
\end{equation*}
Using (\ref{Blago_harmonic}) we have
\begin{align*}
&\bra (P_rKP_r+\alpha)f,f \cet_{L^2(0, 2T)}=\bra u^{P_rf}(T), u^{P_rf}(T)\cet_{L^2(M;dV)}
+\bra \alpha f,f\cet _{L^2(0, 2T)}.
\end{align*} 
Operator $P_rKP_r+\alpha$ is coersive when $\alpha > 0$. The Lax-Milgram Theorem implies that it is invertible, and we have an expression for minimizer
\begin{equation*}
 f_{\alpha,r}:=f_\infty = (P_rKP_r + \alpha)^{-1}P_r B 1.
\end{equation*}
According to \cite{Tataru1995}, see also \cite{Katchalov2001}, we know that
\begin{equation*}
\{ u^f(T) \in L^2(M(r));\ f \in S_r \}
\end{equation*}
is dense in $L^2(M(r))$. 
Let $\epsilon > 0$ and fix $f_\epsilon \in S_r$ such that
\begin{equation}
\label{rudiwölleri}
\norm{u^{f_\epsilon}(T) - 1_{M(r)}}_{L^2(M; dV)}^2 \le \epsilon.
\end{equation}
Using (\ref{eq:energy_functional}) we have
\begin{align*}
\norm{u^{f_{\alpha,r}}(T) - 1_{M(r)}}_{L^2(M; dV)}^2 \le  
E(f_{\alpha,r})+\norm{1_{M(r)}}_{L^2(M; dV)}^2.
\end{align*}
Because $E(f_{\alpha,r}) \le E(f_\epsilon)$ 
we have
\begin{align*}
\norm{u^{f_{\alpha,r}}(T) - 1_{M(r)}}_{L^2(M; dV)}^2 & \le  
 \norm{u^{f_\epsilon}(T) - 1_{M(r)}}_{L^2(M; dV)}^2 + \alpha \norm{f_\epsilon}^2.
\\\nonumber& \le \epsilon + \alpha \norm{f_\epsilon}^2.
\end{align*}
Using (\ref{rudiwölleri}) we may choose first small $\epsilon > 0$ and then small $\alpha > 0$ to 
see that $u^{f_\alpha}(T)$ tends to $1_{M(r)}$ in $L^2(M)$ as $\alpha\to 0$. 
\end{proof}

See Figure \ref{fig:sika} for a visualization of $M(r)$.
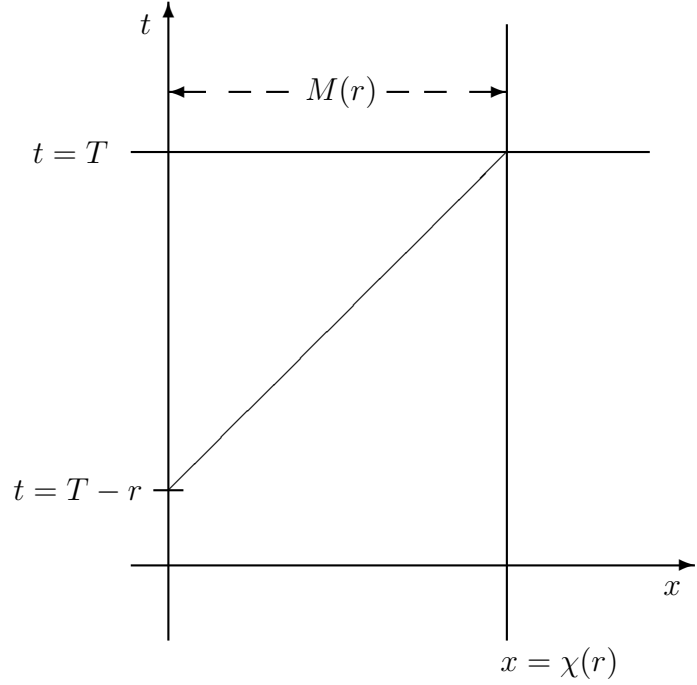
\begin{figure}[H]
\setlength{\unitlength}{1mm}
\begin{picture}(120,90)(-20,0)
\thicklines
  \put(15,15){\vector(1,0){75}}
   \put(20,5){\vector(0,1){85}}
   \put(65,5){\line(0,1){82}}
   \put(15,70){\line(1,0){69}}
   \put(18,25){\line(1,0){4}}
   
  \put(87,12){\makebox(0,0){$x$}}
   \put(17,87){\makebox(0,0){$t$}}
  \put(7,70){\makebox(0,0){$t=T$}}
  \put(8,25){\makebox(0,0){$t=T-r$}}
  \put(72,2){\makebox(0,0){${x=\chi(r)}$}}
  \put(43,78){\makebox(0,0){$M(r)$}}
  \multiput(28,78)(5,0){2}{\line(1,0){3}}
  \multiput(49,78)(5,0){2}{\line(1,0){3}}
  \put(25,78){\vector(-1,0){5}}
   \put(60,78){\vector(1,0){5}}
 \thinlines
\put(20,25){\line(1,1){45}}
\end{picture}
\caption{\label{fig:sika}When sending information from an interval of length $r$, that is when $\supp(f) \subset[T-r,T]$, the solution $u^f(x,T)$ is supported in the domain of influence ${ M(r)}$. }
\end{figure}
We define the travel time coordinates for $x \in M$ by
\begin{align*}
\tau:[0,\infty)\to[0,\infty), \quad \tau(x) = d(x, 0).
\end{align*}
The function $\tau$ is strictly increasing and we denote its inverse by
\begin{align*}
\chi=\tau^{-1}:[0,\infty)\to[0,\infty).
\end{align*}
We have
\begin{align}
\label{kiipilkku}
\chi(0) = 0, \quad \chi'(t) = \frac{1}{\tau'(\chi(t))} = c(\chi(t)).
\end{align}
 Thus denoting $v(t) = c(\chi(t))$ and use $V(r)$ to denote the volume of $M(r)$ with respect to the measure $dV$ we have  
\begin{align}
\label{Vee}
V(r) 
=\norm{1_{M(r)}}_{L^2(M;dV)}^2 = \int_0^{\chi(r)} \frac{dx}{c(x)^2}
= \int_0^r \frac{\chi'(t) dt}{v(t)^2}
= \int_0^r \frac{dt}{v(t)}.
\end{align}
Note that $M(r)=[0,\chi(r)]$. In particular, $V(r)$ determines the wave speed in the travel time coordinates,
\begin{align}
\label{pikkuvee}
v(r) = \frac{1}{\p_r V(r)},
\end{align}
and also in the original coordinates since
\begin{align}
\label{cee}
c(x) = v(\chi^{-1}(x)), 
\quad \chi(t) = \int_0^t v(t^\prime) dt^\prime.
\end{align}
Using Theorem \ref{thm_minimization} and (\ref{Blago_harmonic}) we have a method to compute 
the volumes of the domains of influence
\begin{align}
\label{possu}
V(r) = \norm{1_{M(r)}}_{L^2(M;dV)}^2 = \lim_{\alpha \to 0} \bra f_{\alpha,r}, B1\cet _{L^2(0, 2T)},
\end{align}
where $r\in [0,T]$. 
We are ready to prove Theorem \ref{Amiinus}.
\begin{proof}[Proof of Theorem \ref{Amiinus}.] For a given measurement $\Lambda$, Theorem \ref{thm_minimization} and equations (\ref{pikkuvee}), (\ref{cee}), (\ref{possu}) give us a way to calculate for all $x\in (0,L)$ the value of the velocity function
\begin{align*}
c(x)= v(\chi^{-1}(x))=\mathcal A^{-1}(\Lambda)(x).
 \end{align*}
As we assumed that outside of the interval $(0,L)$ the function c is identically one, the proof for the existence of inverse map $\mathcal A^{-1}$ is complete. 
Since the direct map $\mathcal A$ is continuous and $\mathcal{D}(\mathcal A)$ is relatively compact in $X$, we see that $\mathcal A^{-1}$ is a continuous map.
 \end{proof}

\section{Stability of regularized problem}
In this section we prove Theorem \ref{kaiken_teoria}. We will construct the operator $\mathcal R_{\alpha(\epsilon)}$ as a composition of several operators. The construction is motivated by the proof of Theorem \ref{Amiinus}. We define for a Banach space $E$
\begin{align*}
\mathcal{K} (E)=\{A\in\mathcal{L} (E) ;
A\text{ is compact}\}.
\end{align*}
Let $J$, $R$ be as defined in (\ref{operaattorit}). Using (\ref{operaattorit}) we see that $J\in\mathcal{K} (L^2(0,2T))$. We define 
\begin{align}
\label{Hoo}
&\boldsymbol{K} :Y \to \mathcal{K}(L^2(0,2T)), 
\quad \boldsymbol{K}\wtilde\Lambda=    
J\wtilde\Lambda - R\wtilde\Lambda RJ.
\\\nonumber& \boldsymbol{H} :Y \mapsto C([0,T],Y) , 
\quad \boldsymbol{H}\wtilde\Lambda=r\mapsto    
P_r(\boldsymbol{K}\wtilde\Lambda)P_r.
\end{align}
\begin{proposition}
\label{K}
We have $\norm{\boldsymbol{H}}_{Y \to C([0,T],Y)}\le 2T$. 
\end{proposition}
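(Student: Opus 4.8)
The plan is to estimate the operator norm of $\boldsymbol{H}\wtilde\Lambda$ in $C([0,T],Y)$ by bounding, uniformly in $r\in[0,T]$, the $Y$-norm of $P_r(\boldsymbol K\wtilde\Lambda)P_r$, and then to extract the factor $2T$ from the norm of the integral operator $J$ (and its relatives $R$, $RJ$) appearing in the definition of $\boldsymbol K$. The key observation is that the cutoff projections $P_r$ are contractions on $L^2(0,2T)$, so $\norm{P_r A P_r}_Y \le \norm{A}_Y$ for every $A\in Y$ and every $r$; this immediately reduces the problem to bounding $\norm{\boldsymbol K\wtilde\Lambda}_Y$ uniformly, which in turn is a statement about $\norm{J}_{L^2\to L^2}$ and $\norm{R}_{L^2\to L^2}$.

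First I would record that $R$ is a reflection, hence an isometry with $\norm{R}_Y=1$ and $R^2=\mathrm{Id}$. Next I would estimate the norm of the Volterra-type averaging operator $J$ defined in (\ref{operaattorit}). Since
\begin{equation*}
Jf(t)=\frac12\int_0^{2T}1_\triangle(t,s)f(s)\,ds,
\end{equation*}
and $1_\triangle(t,s)$ is a bounded kernel supported on the triangle $\{s>t,\ t+s\le 2T\}\subset(0,2T)\times(0,2T)$, I would apply a Schur test (or simply Cauchy--Schwarz together with the fact that the kernel is $0/\tfrac12$-valued on a set of $s$-length at most $2T$ for each fixed $t$, and symmetrically in $t$). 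This gives $\norm{J}_Y\le T$: for each fixed $t$ the $s$-section of the kernel has measure at most $2T$ and the kernel is bounded by $\tfrac12$, so $\sup_t\int|k(t,s)|\,ds\le T$ and likewise $\sup_s\int|k(t,s)|\,dt\le T$, whence Schur's lemma yields $\norm{J}_Y\le T$.

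Then I would assemble the estimate for $\boldsymbol K$. Writing $\boldsymbol K\wtilde\Lambda=J\wtilde\Lambda-R\wtilde\Lambda RJ$ and using submultiplicativity of the operator norm together with $\norm{R}_Y=1$ and $\norm J_Y\le T$, I expect
\begin{equation*}
\norm{\boldsymbol K\wtilde\Lambda}_Y\le \norm{J}_Y\norm{\wtilde\Lambda}_Y+\norm{R}_Y\norm{\wtilde\Lambda}_Y\norm{R}_Y\norm{J}_Y\le 2T\,\norm{\wtilde\Lambda}_Y.
\end{equation*}
Combining this with the contractivity $\norm{P_r A P_r}_Y\le\norm A_Y$ and taking the supremum over $r\in[0,T]$ (which is exactly the $C([0,T],Y)$ norm of $\boldsymbol H\wtilde\Lambda$) gives $\norm{\boldsymbol H\wtilde\Lambda}_{C([0,T],Y)}\le 2T\,\norm{\wtilde\Lambda}_Y$, i.e. $\norm{\boldsymbol H}_{Y\to C([0,T],Y)}\le 2T$.

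The main obstacle, though a mild one, is the sharp bookkeeping of the constant: one must be careful that the two terms in $\boldsymbol K$ each contribute a factor $T$ (not, say, $2T$ each) so that the final constant is exactly $2T$ rather than something larger. The cleanest route is to pin down $\norm J_Y\le T$ precisely via the Schur test and to avoid overcounting by noting $\norm{R}_Y=1$ exactly; the continuity of $r\mapsto P_r(\boldsymbol K\wtilde\Lambda)P_r$ as a map into $Y$, needed so that $\boldsymbol H\wtilde\Lambda$ genuinely lands in $C([0,T],Y)$, follows from strong continuity of $r\mapsto P_r$ together with compactness of $\boldsymbol K\wtilde\Lambda$, and can be mentioned briefly.
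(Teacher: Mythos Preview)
Your proposal is correct and, for the norm estimate, follows exactly the paper's route: bound $\norm{P_r}_Y\le1$, $\norm{R}_Y\le1$, $\norm{J}_Y\le T$, and conclude $\norm{\boldsymbol H\wtilde\Lambda(r)}_Y\le 2T\norm{\wtilde\Lambda}_Y$ uniformly in $r$. Your Schur-test justification of $\norm{J}_Y\le T$ is a welcome addition, since the paper simply states this bound.

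The one genuine difference is in the continuity of $r\mapsto P_r(\boldsymbol K\wtilde\Lambda)P_r$. The paper proves this explicitly by taking the singular value decomposition of the compact operator $\wtilde K=\boldsymbol K\wtilde\Lambda$, approximating by finite-rank truncations $\wtilde K^m$, and estimating $\norm{(P_r-P_s)\psi_n}_{L^2}\le C|r-s|^{1/2}$ on the finitely many singular vectors. Your sketch instead invokes the general principle that strong continuity of $r\mapsto P_r$ upgrades to norm continuity when composed with a compact operator (since $P_r$ is self-adjoint, both $P_r\wtilde K$ and $\wtilde K P_r$ vary norm-continuously in $r$). This is correct and more economical; the paper's SVD argument is really just an unpacking of the same mechanism, trading abstraction for explicitness.
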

\begin{proof}Let $r\in [0,T]$. We have estimates $\norm{P_r}_{Y}\le 1$, $\norm{R}_{Y}\le 1$, $\norm{J}_{Y}\le T$, and
\begin{align*}
\norm{\boldsymbol{H}\wtilde{\Lambda}(r)}_{\mathcal{L}(L^2(0,2T))} \le2\norm{J}_{\mathcal{L}(L^2(0,2T))}\norm{\wtilde{\Lambda}}_{\mathcal{L}(L^2(0,2T))}\le
2T \norm{\wtilde{\Lambda}}_{\mathcal{L}(L^2(0,2T))}.
\end{align*}
Thus
\begin{align*}
\norm{\boldsymbol{H}}_{Y\to L^{\infty} ([0,T],Y)} \le
2T .
\end{align*}
It remains to show that $r\mapsto \boldsymbol{H}\wtilde\Lambda(r)$ is continuous. Let us denote $\wtilde K=\boldsymbol{K}\wtilde\Lambda$. 
Let $r,s\in[0,T]$. We use the singular value decomposition for the compact operator $\wtilde K$. There are orthonormal bases $\{\phi_n\}_{n=1}^\infty\in L^2(0,2T)$ and $\{\psi_n\}_{n=1}^\infty\in L^2(0,2T)$ such that
\begin{align}
\label{nakustelijat}
\wtilde Kf=\sum_{n=1}^{\infty}\mu_n\bra f,\phi_n\cet_{L^2(0,2T)} \psi_n,
\end{align}
for all $f\in L^2(0,2T)$, where $\mu_n\in\mathbb R$ are the singular values of $\wtilde K$.
We define the family $\{\wtilde K^m\}_{m=1}^\infty$ of finite rank operators by the formula 
\begin{align}
\label{nakustelijat2}
\wtilde K^mf=\sum_{n=1}^{m}\mu_n\bra f,\phi_n\cet_{L^2(0,2T)} \psi_n.
\end{align}
Then
\begin{align}
\label{seriffi}
&\norm{P_r\wtilde{K}P_rf-P_s\wtilde{K}P_sf}_{L^2(0,2T)}
\\\nonumber& \le \norm{P_r\wtilde{K}P_rf-P_r\wtilde{K}^mP_rf}_{L^2(0,2T)}+
\norm{P_r\wtilde{K}^mP_rf-P_s\wtilde{K}^mP_rf}_{L^2(0,2T)}+
\\\nonumber&  \norm{P_s\wtilde{K}^mP_rf-P_s\wtilde{K}^mP_sf}_{L^2(0,2T)}+
\norm{P_s\wtilde{K}^mP_sf-P_s\wtilde{K}P_sf}_{L^2(0,2T)}.
\end{align}
Let $\epsilon>0$ and let $\norm{f}_{L^2(0,2T)}\le1$. By choosing $m$ large enough we have 
\begin{align*}
\norm{P_r\wtilde{K}P_rf-P_r\wtilde{K}^mP_rf}_{L^2(0,2T)}+
\norm{P_s\wtilde{K}^mP_sf-P_s\wtilde{K}P_sf}_{L^2(0,2T)}\le \frac{\epsilon}{2}.
\end{align*}
Applying projections to (\ref{nakustelijat2}) we see that
\begin{align*}
P_s\wtilde K^mP_rf=\sum_{n=1}^{m}\mu_n\bra f,P_r\phi_n\cet P_s\psi_n.
\end{align*}
For the second term in the sum (\ref{seriffi}) we have an estimate
\begin{align*}
&\norm{P_r\wtilde{K}^mP_rf-P_s\wtilde{K}^mP_rf}_{L^2(0,2T)}=
\norm{\sum_{n=1}^{m}\mu_n\bra f,P_r\phi_n\cet (P_r-P_s)\psi_n}_{L^2(0,2T)}
\\\nonumber& \le\sum_{n=1}^{m}|\mu_n|\norm{(P_r-P_s)\phi_n}_{L^2(0,2T)}
\le C(m)|r-s|^{\frac{1}{2}}.
\end{align*}
For the third term in the sum we have an analogous estimate
\begin{align*}
\norm{P_s\wtilde{K}^mP_rf-P_s\wtilde{K}^mP_sf}_{L^2(0,2T)}=
\le C(m)|r-s|^{\frac{1}{2}}.
\end{align*}
Putting these estimates together and choosing $|r-s|\le \delta(\epsilon)= \frac{\epsilon^2}{4C(m)^2}$,
we see that
\begin{align*}
&\norm{P_r\wtilde{K}P_r-P_s\wtilde{K}P_s}_{Y}
\le \epsilon.
\qedhere
\end{align*}
\end{proof}
Let us define 
\begin{align}
\label{hallaaho}
M_1=\sup\{\norm{\mathcal A(c)}_{\mathcal{L} (L^2(0,2T))};c\in\mathcal{D}(\mathcal A)\}.
\end{align}
Using the continuity of $\mathcal A$, see Theorem \ref{tomoffinland} below, we see that $M_1 <\infty$.
We define $M_2=2TM_1$.
Let $c\in \mathcal{D}(\mathcal A)$ and denote $\Lambda=\mathcal A(c)$. We use again the notations $H=\boldsymbol{H}\Lambda$, $\wtilde H=\boldsymbol{H}\wtilde\Lambda$ and $\wtilde H_r=\boldsymbol{H}\wtilde\Lambda(r)$.  Using Proposition \ref{K} we have
\begin{align}
\norm{H}_{C([0,T],Y)}\le M_2.
\end{align}
We define $M_3=M_2+3$ and a family $\{\Psi^Z_{\alpha}\}_{\alpha\in(0,2]}\in C(\mathbb{R})$ by
\begin{displaymath}
\Psi^Z_{\alpha}(s)= \left\{ \begin{array}{cl}
0, & \textrm{if\quad $s>M_3-\frac{\alpha}{4}$},\\
-\frac{4}{\alpha}s+\frac{4M_3}{\alpha}-1, & \textrm{if\quad $s\in(M_3-\frac{\alpha}{2},M_3-\frac{\alpha}{4}]$},\\
1, & \textrm{if\quad   $s\le M_3-\frac{\alpha}{2}$}.\\
\end{array} \right.
\end{displaymath}
For $\alpha\in (0,2]$ we define 
\begin{align}
\label{Zalfa}
 \boldsymbol{Z}_{\alpha}:C([0,T],Y) \to C([0,T],Y) ,
\end{align}
\begin{displaymath}
\boldsymbol{Z}_{\alpha} \big(\wtilde H\big )= 
r\mapsto\Psi^Z_{\alpha}\Big(\norm{M_3-(\wtilde H+\alpha)}_{C([0,T],Y)}\Big)\Big(\wtilde H_r+\alpha\Big)^{-1}.
\end{displaymath}
Let $E$ be a Banach space and let $H\in E$. Let $\epsilon >0$. We denote 
\begin{align}
\label{pallo}
 \mathcal{B}_E(H,\epsilon):=\{\wtilde H\in E : \norm{H-\wtilde{H}}_E < \epsilon \}.
\end{align}
\begin{proposition}
\label{Z2}
Let $\epsilon\in (0,1)$ and let $p\in (0,\frac{1}{2})$. Let $\alpha=2\epsilon^p$ and let $\norm{H}_{C([0,T],Y)}\le M_2$. Let $H_r \in Y$ be positive semidefinite. Let us assume that $\wtilde H\in \mathcal{B}_{C([0,T],Y)} (H,\epsilon)$. Then
\begin{align*}
\norm{\boldsymbol{Z}_{\alpha} \big( H\big ) - \boldsymbol{Z}_{\alpha} \big(\wtilde H\big )}_{C([0,T],Y)} \le
2^{-1}\epsilon^{1-2p}.
\end{align*}  
\end{proposition}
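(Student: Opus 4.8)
The plan is to observe that, under the stated hypotheses, the scalar cut-off factors $\Psi^Z_\alpha(\cdot)$ appearing in $\boldsymbol{Z}_\alpha(H)$ and $\boldsymbol{Z}_\alpha(\wtilde H)$ are \emph{both} equal to $1$, so that the difference collapses to a difference of resolvents, which I then control by a Neumann series anchored on the well-behaved operator $H_r$. First I would check that $\Psi^Z_\alpha\big(\norm{M_3-(H+\alpha)}_{C([0,T],Y)}\big)=1$. For each $r$ the operator $H_r$ is self-adjoint and positive semidefinite with $\norm{H_r}_Y\le M_2=M_3-3$, so for $\norm{f}=1$ we have $0\le \bra H_r f,f\cet\le M_2$, whence $\bra\big((M_3-\alpha)I-H_r\big)f,f\cet\in[\,3-\alpha,\,M_3-\alpha\,]$. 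As $\alpha\in(0,2]$ this interval is positive, so $\norm{M_3-(H_r+\alpha)}_Y\le M_3-\alpha\le M_3-\alpha/2$, and the definition of $\Psi^Z_\alpha$ returns $1$.

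For the perturbed factor I would use $\wtilde H\in\mathcal{B}_{C([0,T],Y)}(H,\epsilon)$ to write $\norm{M_3-(\wtilde H_r+\alpha)}_Y\le\norm{M_3-(H_r+\alpha)}_Y+\norm{\wtilde H_r-H_r}_Y\le(M_3-\alpha)+\epsilon$. Since $\alpha=2\epsilon^p$ with $\epsilon<1$ and $p<1$, we have $\epsilon=\epsilon^1\le\epsilon^p=\alpha/2$, so the right-hand side is $\le M_3-\alpha/2$ and again $\Psi^Z_\alpha(\cdot)=1$. Consequently $\boldsymbol{Z}_\alpha(H)(r)-\boldsymbol{Z}_\alpha(\wtilde H)(r)=(H_r+\alpha)^{-1}-(\wtilde H_r+\alpha)^{-1}$ for every $r\in[0,T]$, and it remains to estimate this resolvent difference uniformly in $r$.

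Next I would bound the two resolvents. Positive semidefiniteness of $H_r$ gives $\bra(H_r+\alpha)f,f\cet\ge\alpha\norm{f}^2$, hence $\norm{(H_r+\alpha)^{-1}}_Y\le\alpha^{-1}$. Because $\wtilde H_r$ need not be self-adjoint, I would not invert it directly but instead factor $\wtilde H_r+\alpha=(H_r+\alpha)\big(I+(H_r+\alpha)^{-1}(\wtilde H_r-H_r)\big)$. The perturbation has norm at most $\alpha^{-1}\epsilon=\epsilon^{1-p}/2<1/2$, so a Neumann series shows the bracket is invertible with inverse of norm at most $2$, giving $\norm{(\wtilde H_r+\alpha)^{-1}}_Y\le 2\alpha^{-1}$. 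Combining via the resolvent identity $(H_r+\alpha)^{-1}-(\wtilde H_r+\alpha)^{-1}=(H_r+\alpha)^{-1}(\wtilde H_r-H_r)(\wtilde H_r+\alpha)^{-1}$ yields the pointwise bound $\alpha^{-1}\cdot\epsilon\cdot2\alpha^{-1}=2\epsilon\alpha^{-2}$, and substituting $\alpha=2\epsilon^p$ turns this into $2\epsilon/(4\epsilon^{2p})=2^{-1}\epsilon^{1-2p}$; taking the supremum over $r$ gives the claim.

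The main obstacle is precisely the non-self-adjointness of $\wtilde H_r$: one cannot read off a bound for $\norm{(\wtilde H_r+\alpha)^{-1}}_Y$ from any spectral positivity, so the estimate must be transferred from $H_r$ through the Neumann series, which forces the restriction $\epsilon/\alpha<1$. A secondary point needing care is the verification that both cut-off factors equal $1$; this is where the constraints $p<1$ (supplied by $p<\tfrac12$) and $\alpha\le2$ are used, ensuring $\epsilon\le\alpha/2$ and the positivity of the shifted spectrum of $H_r$.
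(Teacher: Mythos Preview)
Your argument is correct and tracks the paper's proof closely: both verify that the cut-off $\Psi^Z_\alpha$ equals $1$ for $H$ and $\wtilde H$ (using $\|H_r\|\le M_2$, positive semidefiniteness, and $\epsilon\le\alpha/2$), then reduce to a resolvent estimate anchored at $H_r$ via a Neumann series with perturbation of size $\epsilon/\alpha\le\tfrac12$. The only cosmetic difference is that the paper writes the resolvent difference as $\big([I+H_{\alpha,r}^{-1}E]^{-1}-I\big)H_{\alpha,r}^{-1}$ and bounds it by $\dfrac{\|H_{\alpha,r}^{-1}E\|}{1-\|H_{\alpha,r}^{-1}E\|}\,\|H_{\alpha,r}^{-1}\|$, whereas you use the equivalent identity $(H_r+\alpha)^{-1}(\wtilde H_r-H_r)(\wtilde H_r+\alpha)^{-1}$ together with the bound $\|(\wtilde H_r+\alpha)^{-1}\|\le 2\alpha^{-1}$; both routes land on $2\epsilon\alpha^{-2}=2^{-1}\epsilon^{1-2p}$.
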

\begin{proof}
Using (\ref{Zalfa}) we see that if $\Psi_{\alpha}\Big(\norm{M_3-(\wtilde H+\alpha)}_{C([0,T],Y)}\Big)\neq 0$, then
\begin{align*}
\norm{M_3-(\wtilde H+\alpha)}_{C([0,T],Y)}\le M_3-\frac{\alpha}{4}<M_3
\end{align*}
and $\Big(\wtilde H_r+\alpha\Big)^{-1}$ is defined by the formula
\begin{align*}
\Big(\wtilde H_r +\alpha\Big)^{-1}=\frac{1}{M_3}\Big(I-\frac{M_3-(\wtilde H_r +\alpha)}{M_3}\Big)^{-1}
=\frac{1}{M_3}\sum_{l=1}^\infty\Big(\frac{M_3-(\wtilde H_r +\alpha)}
{M_3}\Big)^l.
\end{align*}
This gives that $\boldsymbol{Z}_{\alpha} \big(\wtilde H\big )(r)\in Y$, when $r\in[0,T]$. Proposition \ref{K} gives continuity for the map $r\mapsto \wtilde H_r$. As $\Big(\wtilde H_r+\alpha\Big)\mapsto\Big(\wtilde H_r+\alpha\Big)^{-1}$ is continuous operation we see that $\boldsymbol{Z}_{\alpha} \big(\wtilde H\big )\in C([0,T],Y) $.
It remains to show that the norm estimate holds.
Having $\epsilon\in (0,1)$, $\norm{H-\wtilde H}_{C([0,T],Y)}\le\epsilon$, and $\alpha=2\epsilon^p$ we have 
\begin{align*}
 \norm{M_3-(\wtilde H+\alpha)}_{C([0,T],Y)}\le M_3-\frac{\alpha}{2}.
 \end{align*}
Thus $\Psi^Z_{\alpha}(\norm{M_3-(\wtilde H+\alpha)}_{C([0,T],Y)})=1$ and $\boldsymbol{Z}_{\alpha} \big(\wtilde H\big )$ is the map
\begin{align*}
r\mapsto\Big(\wtilde H_r+\alpha\Big)^{-1}.
\end{align*}
Let $r\in [0,T]$. We denote
\begin{equation*}
H_{\alpha,r}=(H_r+\alpha),\quad
\wtilde{H}_{\alpha,r}=(\wtilde{H_r}+\alpha),\quad
E=\wtilde{H}_{\alpha,r}-H_{\alpha,r}.
\end{equation*}
As $H_r$ is positive semidefinite we have
\begin{equation}
\label{kana}
\norm{H_{\alpha,r}^{-1}}_{Y}\le \alpha^{-1}.
\end{equation}
Moreover
\begin{equation*}
\wtilde{H}_{\alpha,r}^{-1}-H_{\alpha,r}^{-1}=
\big(\big[I+H_{\alpha,r}^{-1}E\big]^{-1}-I\big)H_{\alpha,r}^{-1}.
\end{equation*}
Thus 
\begin{equation}
\label{sikaniskat}
\norm{(\wtilde{H}_{\alpha,r})^{-1}-(H_{\alpha,r})^{-1}}_{Y}\le
\frac{\norm{(H_{\alpha,r})^{-1}E}_{Y}}{1-\norm{(H_{\alpha,r})^{-1}E}_{Y}} \norm{(H_{\alpha,r})^{-1}}_{Y}.
\end{equation}
We have $\frac{1}{2}\ge\frac{\epsilon}{\alpha}$.
Using (\ref{kana}) and (\ref{sikaniskat}) we have 
\begin{equation*}
\label{amatoestim}
\norm{(\wtilde{H}_{\alpha,r})^{-1}-(H_{\alpha,r})^{-1}}_{Y}\le
\frac{\frac{\epsilon}{\alpha}}{1-\frac{1}{2}} \norm{(A_{\alpha,r})^{-1}}_{Y}
\le 2\frac{\epsilon}{\alpha^2}=2^{-1}\epsilon^{1-2p}.
\qedhere
\end{equation*}
\end{proof} 
Let $P_r$ and $B$ be as defined in (\ref{operaattorit}). We define 
\begin{align}
\label{innerproduct}
& \boldsymbol{S}: C([0,T],Y) \to C([0,T]),
\\\nonumber  & \boldsymbol S\big(\wtilde Z_{\alpha}\big)(r) =\bra \wtilde Z_{\alpha}(r)P_r B1,B1\cet _{L^2(0,2T)}.
\end{align}
\begin{proposition}
\label{S}
We have $\norm{\boldsymbol{S}}_{C([0,T])}\le \frac{T^3}{3}$. 
\end{proposition}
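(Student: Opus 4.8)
The plan is to bound the operator norm of $\boldsymbol S$ directly, using the Cauchy--Schwarz inequality together with the contractivity of the cut-off $P_r$, and then to evaluate the resulting constant by an elementary integral. Since $\boldsymbol S$ is linear in its argument $\wtilde Z_\alpha$, it suffices to estimate $\abs{\boldsymbol S(\wtilde Z_\alpha)(r)}$ for a fixed $r\in[0,T]$ and a fixed $\wtilde Z_\alpha\in C([0,T],Y)$ with $\norm{\wtilde Z_\alpha}_{C([0,T],Y)}\le 1$, and then take the supremum over $r$.

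For the pointwise estimate I would proceed as follows. By Cauchy--Schwarz in $L^2(0,2T)$,
\begin{align*}
\abs{\boldsymbol S(\wtilde Z_\alpha)(r)}
=\abs{\bra \wtilde Z_\alpha(r)P_r B1,\,B1\cet_{L^2(0,2T)}}
\le \norm{\wtilde Z_\alpha(r)P_r B1}_{L^2(0,2T)}\,\norm{B1}_{L^2(0,2T)}.
\end{align*}
Since $\wtilde Z_\alpha(r)\in Y=\mathcal L(L^2(0,2T))$ and $\norm{P_r}_Y\le 1$, the first factor is bounded by $\norm{\wtilde Z_\alpha(r)}_Y\,\norm{B1}_{L^2(0,2T)}\le \norm{B1}_{L^2(0,2T)}$. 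Taking the supremum over $r\in[0,T]$ then yields $\norm{\boldsymbol S(\wtilde Z_\alpha)}_{C([0,T])}\le \norm{B1}_{L^2(0,2T)}^2$, so the whole problem reduces to computing $\norm{B1}_{L^2(0,2T)}^2$.

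The final step is the explicit computation of this constant. From the definition of $B$ in (\ref{operaattorit}) we have $B1(t)=1_{(0,T)}(t)\int_t^T 1\,ds=(T-t)1_{(0,T)}(t)$, hence
\begin{align*}
\norm{B1}_{L^2(0,2T)}^2=\int_0^{T}(T-t)^2\,dt=\frac{T^3}{3},
\end{align*}
which gives precisely the claimed bound $\norm{\boldsymbol S}\le T^3/3$. I do not expect any genuine obstacle here: the estimate is a straightforward application of Cauchy--Schwarz, and the only quantitative input is the elementary integral above, which pins down the exact constant. The one point to keep slightly honest is the interpretation of the norm $\norm{\boldsymbol S}_{C([0,T])}$ as the operator norm $\norm{\boldsymbol S}_{C([0,T],Y)\to C([0,T])}$, and the observation that all the bounds above are uniform in $r$ so that they pass to the supremum defining the $C([0,T])$-norm of the image.
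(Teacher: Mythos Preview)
Your argument is correct and follows essentially the same route as the paper: apply Cauchy--Schwarz to the inner product, use $\norm{P_r}_Y\le 1$ and $\norm{\wtilde Z_\alpha(r)}_Y\le\norm{\wtilde Z_\alpha}_{C([0,T],Y)}$, and reduce everything to the computation $\norm{B1}_{L^2(0,2T)}^2=T^3/3$. The only item the paper adds that you omit is a one-line check that $r\mapsto \boldsymbol S(\wtilde Z_\alpha)(r)$ is continuous (so that the image really lies in $C([0,T])$), which follows from the continuity of $r\mapsto P_rB1$ and of $r\mapsto \wtilde Z_\alpha(r)$; you may want to include that sentence for completeness.
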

\begin{proof}
As the maps $r\mapsto P_r B1$ and $r\mapsto Z_{\alpha}(r)$ are continuous, we have that $\boldsymbol S\big(\wtilde Z_{\alpha}) \in C([0,T])$.  
Let $r\in[0,T]$. We have 
\begin{align*}
 \norm{P_r}_{Y}\le 1,\quad \norm{B1}^2_{Y}= \frac{T^3}{3},
 \end{align*}
and therefore
\begin{align*}
&
|\boldsymbol S\big(\wtilde Z_{\alpha}\big)(r)| =|\bra \wtilde Z_{\alpha}(r)P_r B1,B1\cet _{L^2(0,2T)}|\le \frac{T^3}{3}\norm{\wtilde Z_{\alpha}(r)}_{Y}.
\qedhere
\end{align*} 
\end{proof} 
\begin{lemma}
\label{lemmah1}
Let $c\in\mathcal{D}(\mathcal A)$. There is $C>0$ such that for all $r>0$ and $p \in H^1(M)$ satisfying $supp(p)\subset M(r)$ there is $f\in S_r$ such that $u^{f}(x,T)=p(x)$ and
\begin{align}
\label{tauski}
\norm{f}_{L^2(0,2T)}\le C\norm{p}_{H^1(M)}.
\end{align}
\end{lemma}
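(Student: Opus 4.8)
The plan is to reduce the variable-speed equation to a constant-coefficient wave equation with a lower-order perturbation by passing to the travel-time coordinate, to solve the unperturbed model explicitly by d'Alembert's formula, and then to correct for the perturbation through a causal (Volterra) argument. Throughout I take $r\in(0,T]$, which is the range in which $S_r$ and $M(r)$ are used.

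First I would change to the travel-time coordinate $y=\tau(x)$ and set $\tilde u(t,y)=u^{f}(t,\chi(y))$. Using $\chi'(t)=c(\chi(t))=v(t)$ from (\ref{kiipilkku}), a direct computation turns (\ref{dartwader}) into $\p_t^2\tilde u-\p_y^2\tilde u+b(y)\,\p_y\tilde u=0$ on $(0,2T)\times[0,\infty)$ with $b=v'/v$, zero Cauchy data, and the Neumann condition $\p_y\tilde u(t,0)=v(0)f(t)$. Because $C_0\le c\le C_1$ and $\norm{c}_{C^2(M)}\le m$, the map $\chi$ is bi-Lipschitz and $b$ is uniformly bounded, so $\norm{p}_{H^1(M)}$ is comparable to $\norm{\tilde p}_{H^1}$ with $\tilde p(y)=p(\chi(y))$, the support condition $\supp(p)\subset M(r)=[0,\chi(r)]$ becomes $\supp(\tilde p)\subset[0,r]$, and any $L^2$ bound $\norm{f}\le C\norm{\tilde p}_{H^1}$ transfers back to (\ref{tauski}). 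Thus it suffices to steer $\tilde u(T,\cdot)=\tilde p$ by a control $f$ supported in $[T-r,T]$, with the stated bound. (Alternatively one may apply the Liouville substitution $w=v^{1/2}\tilde u$ to replace the first-order term by a bounded potential $q$ at the price of a Robin condition at $y=0$; either normal form works below.)

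Second, I would solve the unperturbed model $b\equiv 0$. The principal part has speed one, so finite speed of propagation gives $\supp(\tilde u(T,\cdot))\subset[0,r]$ for $f\in S_r$, and d'Alembert's formula for the half-line Neumann problem with zero Cauchy data yields the explicit representation $\tilde u(T,y)=-v(0)\int_0^{T-y}f(s)\,ds$ on $[0,r]$. Imposing $\tilde u(T,\cdot)=\tilde p$ forces $f(s)=\tilde p\,'(T-s)/v(0)$ on $[T-r,T]$, which is admissible since $\tilde p$ vanishes at and beyond $y=r$, and gives $\norm{f}_{L^2}=\norm{\tilde p\,'}_{L^2(0,r)}/v(0)\le C\norm{\tilde p}_{H^1}$. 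Hence the final-position map $\Phi_0:f\mapsto\tilde u(T,\cdot)$ is an isomorphism from $S_r$ onto $\{\tilde p\in H^1:\supp\tilde p\subset[0,r]\}$ with a norm bound depending only on the constants of the class.

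Third I would treat the lower-order term as a perturbation. Writing the final-position map for the full equation as $\Phi=\Phi_0+\Phi_1$, where $\Phi_1$ collects the Duhamel contribution of $b\,\p_y\tilde u$ (or of $qw$), I would use finite speed of propagation to show that $\Phi_0^{-1}\Phi_1$ is a Volterra operator on $L^2(T-r,T)$: the value of $\Phi_0^{-1}\Phi_1 f$ at parameter $s$ depends only on $f$ at strictly earlier times, since the scattered contribution reaches the backward light cone of $(T,y)$, $y=T-s$, strictly after the direct transport already encoded in $\Phi_0$. A Volterra operator is quasinilpotent, so $I+\Phi_0^{-1}\Phi_1$ is boundedly invertible via a convergent Neumann series whose norm is controlled by $\norm{b}_{L^\infty}$ (or $\norm{q}_{L^\infty}$) and $T$. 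Setting $f=\Phi^{-1}\tilde p=(I+\Phi_0^{-1}\Phi_1)^{-1}\Phi_0^{-1}\tilde p$ and transforming back produces the required $f\in S_r$ with $u^{f}(\cdot,T)=p$ and (\ref{tauski}). The main obstacle is precisely this step: upgrading the approximate controllability that follows from the density result of \cite{Tataru1995} (used already in the proof of Theorem \ref{thm_minimization}) to exact controllability with the quantitative $H^1\to L^2$ bound. The crux is verifying both the causal structure and the $L^2$-boundedness of $\Phi_0^{-1}\Phi_1$, i.e. checking that the derivative lost in $\Phi_0^{-1}$ is compensated by the smoothing of the Duhamel integral along characteristics; this is where the genuinely one-dimensional geometry and the sharp time $T>L/C_0$ of (\ref{aikapoika}) enter.
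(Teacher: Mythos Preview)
Your approach is viable but genuinely different from the paper's. The paper's argument is essentially three lines: it swaps the roles of $t$ and $x$, posing the same equation as a Cauchy problem in the $x$-direction with zero data $\wtilde u=\p_x\wtilde u=0$ on the far slice $x=\chi(T)$ and Dirichlet data $\wtilde u(\cdot,T)=p$ on the lateral piece $t=T$; the control $f$ is then read off from the trace of $\wtilde u$ at $x=0$, the $L^2$ bound comes from the sharp hyperbolic trace regularity of \cite{Lasiecka1986}, and finite speed of propagation simultaneously yields $\supp f\subset[T-r,T]$ and the vanishing of $\wtilde u,\p_t\wtilde u$ at $t=0$ needed to identify $\wtilde u$ with $u^f$. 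Your route---travel-time coordinates, the explicit d'Alembert inverse $\Phi_0^{-1}\tilde p(s)=\tilde p\,'(T-s)/v(0)$ for the principal part, then a causal perturbation argument---is more constructive and self-contained, whereas the paper's is much shorter but outsources the estimate to a cited theorem. One caution on your Step~3: with the first-order term $b\,\p_y$, the operator $\Phi_0^{-1}\Phi_1$ is causal but not a pure Volterra integral operator, because the progressing-wave amplitude $A(y)=(v(0)/v(y))^{1/2}$ contributes a multiplicative piece $m(s)=A(T-s)-1$; the assertion ``strictly earlier times'' thus fails on the diagonal and quasinilpotence is not automatic---you must first factor out $I+m$, which is harmless since $A>0$. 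Your Liouville alternative sidesteps this cleanly (the zero-order perturbation $qw$ produces no diagonal term, and $c-1\in C_0^2(0,L)$ forces $v'(0)=0$, so the Robin condition at $y=0$ collapses back to Neumann). Finally, (\ref{aikapoika}) plays no role in this lemma; it is used only so that $M(T)\supset[0,L]$ in the global reconstruction.
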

We recall that $M(r)$ is defined by (\ref{Mr}) and $S_r$ is defined by (\ref{muumipappa}).
\begin{proof}
Let us consider the wave equation with time and space having the exchanged roles
\begin{align}
\label{eq_wave_boundary_sourset2}
&(\p_x^2-c(x)^{-2} \p_t^2)\wtilde u(x,t) = 0, & (x,t) \in (0,\chi(T))\times (0,T),
\\\nonumber& \wtilde u(x,T) = p(x), 
& x \in [0,\chi(T)],
\\& \nonumber\wtilde u(\chi(T),t) = \p_x\wtilde u(\chi(T),t) = 0,  & t \in (0,T).
\end{align}
By \cite{Lasiecka1986} the solution of (\ref{eq_wave_boundary_sourset2}) satisfies
\begin{align}
\norm{\wtilde{u}(0,\cdotp)}_{H^1(0,T)} \leqslant
C\norm{p}_{H^1(M(T))}.
\end{align}
If $supp(p)\subset M(r)$ then $supp(\wtilde{u}(0,\cdotp)) \subset [T-r,T]$ and $\wtilde u(x,0) = \p_t \wtilde u(x,0) = 0$, when $x\in [0,\chi(T)]$, by finite speed of propagation.
We choose $f(t)=\wtilde u(0,t)$.
\end{proof}
Let $f_{\alpha,r}$ be as in (\ref{minimizer}) and define 
\begin{align}
\label{toukollepatsas}
s_\alpha\in C([0,T]),\quad s_\alpha(r):=\bra f_{\alpha,r}, B1\cet _{L^2(0, 2T)}.
\end{align}
\begin{lemma}
\label{estimate-f}
Let $\alpha\in (0,\min(1,\frac{1}{\chi(T)^2}))$. Let $V$ be as defined in (\ref{Vee}). Then there is $C>0$, independent $\alpha$, such that 
\begin{align*}
\norm{s_\alpha-V}_{C([0,T])} \le C\alpha^{\frac{1}{4}}.
\end{align*}
\end{lemma}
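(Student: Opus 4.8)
The plan is to reduce the estimate on $s_\alpha-V$ to a quantitative $L^2(M;dV)$ convergence rate for $u^{f_{\alpha,r}}(T)\to 1_{M(r)}$, and then to establish that rate by testing the energy functional against a carefully chosen competitor. Since $\supp(f_{\alpha,r})\subset[T-r,T]$, finite speed of propagation gives $\supp(u^{f_{\alpha,r}}(T))\subset M(r)$, so the first Blagovestchenskii identity in (\ref{Blago_harmonic}) yields
\[
s_\alpha(r)=\bra f_{\alpha,r},B1\cet_{L^2(0,2T)}=\bra u^{f_{\alpha,r}}(T),1\cet_{L^2(M;dV)}=\bra u^{f_{\alpha,r}}(T),1_{M(r)}\cet_{L^2(M;dV)},
\]
while $V(r)=\bra 1_{M(r)},1_{M(r)}\cet_{L^2(M;dV)}$. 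Subtracting and applying Cauchy--Schwarz, together with $V(r)\le V(T)\le T/C_0$ from (\ref{Vee}),
\[
|s_\alpha(r)-V(r)|\le \norm{u^{f_{\alpha,r}}(T)-1_{M(r)}}_{L^2(M;dV)}\,V(r)^{1/2}\le (T/C_0)^{1/2}\norm{u^{f_{\alpha,r}}(T)-1_{M(r)}}_{L^2(M;dV)}.
\]
Hence it suffices to prove $\norm{u^{f_{\alpha,r}}(T)-1_{M(r)}}_{L^2(M;dV)}\le C\alpha^{1/4}$ uniformly in $r\in[0,T]$.

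Next I exploit minimality. For any $g\in S_r$, the identity (\ref{eq:energy_functional}) combined with $E(f_{\alpha,r})\le E(g)$ gives, after cancelling $\norm{1_{M(r)}}^2$ and dropping $\alpha\norm{f_{\alpha,r}}^2\ge 0$,
\[
\norm{u^{f_{\alpha,r}}(T)-1_{M(r)}}_{L^2(M;dV)}^2\le \norm{u^{g}(T)-1_{M(r)}}_{L^2(M;dV)}^2+\alpha\norm{g}_{L^2(0,2T)}^2.
\]
Because $1_{M(r)}\notin H^1$, I cannot apply Lemma \ref{lemmah1} with $p=1_{M(r)}$; instead, for a parameter $\delta\in(0,\chi(r)]$ I take $p_\delta\in H^1(M)$ supported in $M(r)=[0,\chi(r)]$, equal to $1$ on $[0,\chi(r)-\delta]$ and decreasing linearly to $0$ on $[\chi(r)-\delta,\chi(r)]$. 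Then $\norm{p_\delta-1_{M(r)}}_{L^2(M;dV)}^2\le \delta/C_0^2$ and $\norm{p_\delta}_{H^1(M)}^2\le \chi(T)+1/\delta$. By Lemma \ref{lemmah1} there is $g_\delta\in S_r$ with $u^{g_\delta}(T)=p_\delta$ and $\norm{g_\delta}^2\le C^2(\chi(T)+1/\delta)$, whence
\[
\norm{u^{f_{\alpha,r}}(T)-1_{M(r)}}_{L^2(M;dV)}^2\le \frac{\delta}{C_0^2}+\alpha C^2\Big(\chi(T)+\frac{1}{\delta}\Big).
\]
Choosing $\delta=\alpha^{1/2}$ (admissible when $\chi(r)\ge\alpha^{1/2}$) balances $\delta$ against $\alpha/\delta$, and the leftover term $\alpha\chi(T)$ is controlled because the hypothesis $\alpha<1/\chi(T)^2$ gives $\alpha\chi(T)=\alpha^{1/2}(\alpha^{1/2}\chi(T))<\alpha^{1/2}$. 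This yields $\norm{u^{f_{\alpha,r}}(T)-1_{M(r)}}_{L^2(M;dV)}^2\le C'\alpha^{1/2}$, hence the desired $\alpha^{1/4}$ rate.

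The remaining and most delicate point is uniformity for small $r$: when $\chi(r)<\alpha^{1/2}$ there is no room for the ramp and the construction of $p_\delta$ breaks down. In this regime I argue crudely, testing $E(f_{\alpha,r})\le E(0)=0$ in (\ref{eq:energy_functional}) to get $\norm{u^{f_{\alpha,r}}(T)-1_{M(r)}}_{L^2(M;dV)}\le V(r)^{1/2}$, so by the first paragraph $|s_\alpha(r)-V(r)|\le V(r)$. Since $\chi(r)\ge C_0 r$ forces $r<\alpha^{1/2}/C_0$ and $V(r)\le r/C_0$, this gives $|s_\alpha(r)-V(r)|\le \alpha^{1/2}/C_0^2\le C\alpha^{1/4}$ for $\alpha\in(0,1)$. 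Combining the two regimes bounds $|s_\alpha(r)-V(r)|$ uniformly in $r$, with $C$ depending only on $c$ (through $C_0$, $\chi(T)$, and the constant of Lemma \ref{lemmah1}) and not on $\alpha$; taking the supremum over $r\in[0,T]$ produces the $C([0,T])$ estimate. The main obstacle is exactly this trade-off between the approximation error $\delta$ and the regularization penalty $\alpha/\delta$, made uniform across all scales of $r$ down to $0$, with the smallness assumption $\alpha<1/\chi(T)^2$ ensuring that both the residual term $\alpha\chi(T)$ and the small-$r$ contribution stay within the $\alpha^{1/4}$ budget.
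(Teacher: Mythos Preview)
Your argument is correct and follows the same core strategy as the paper: build an $H^1$ approximation of $1_{M(r)}$, invoke Lemma~\ref{lemmah1} to produce a competitor boundary source, use minimality of $f_{\alpha,r}$ for the Tikhonov functional, and balance $\delta$ against $\alpha/\delta$ by taking $\delta=\alpha^{1/2}$. The one substantive difference is the placement of the ramp: the paper extends $1_{M(r)}$ \emph{outward}, putting the linear decay on $[\chi(r),\chi(r)+\delta]$, whereas you cut \emph{inward} on $[\chi(r)-\delta,\chi(r)]$. Your choice keeps $\supp(p_\delta)\subset M(r)$, so Lemma~\ref{lemmah1} delivers $g_\delta\in S_r$ directly and the comparison $E(f_{\alpha,r})\le E(g_\delta)$ over $S_r$ is immediate; the price is the separate small-$r$ regime $\chi(r)<\alpha^{1/2}$, which you dispatch cleanly with the trivial competitor $g=0$. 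The paper's outward ramp avoids any case split, but its $w_\delta$ is supported in the strictly larger set $[0,\chi(r)+\delta]\supsetneq M(r)$, so Lemma~\ref{lemmah1} as stated only yields $f_\delta\in S_{r'}$ for some $r'>r$, and the minimality comparison over $S_r$ requires an extra word of justification that the paper does not spell out. In that respect your version is the tidier of the two.
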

\begin{proof}
Let $r\in [0,T]$ and $\delta>0$. Let us define $w_\delta\in H^1(M)$ 
 \begin{displaymath}
 w_\delta (x) = \left\{ \begin{array}{cl}
1, & \textrm{if\quad $x \in (0,\chi(r))$},\\
1-\frac{x-\chi(r)}{\delta}, & \textrm{if\quad $x \in [\chi(r),\chi(r)+\delta]$},\\
0, & \textrm{if\quad $x \in (\chi(r)+\delta,\infty)$}.\\
\end{array} \right.
\end{displaymath}
Using $c(x)>C_0$ we have
\begin{equation}
\label{ufepsilon}
\norm{w_{\delta}- 1_{M(r)}}_{L^2(M; dV)}^2 \le \frac{\epsilon}{3C_0^2}.
\end{equation}
When $\delta\in (0,\min(1,\frac{1}{\chi(T)}))$ we have 
\begin{equation}
\label{änkyräkänni}
\norm{w_\delta}_{H^1(M)}^2 
\le \chi(T)+\frac{\delta}{3} + \frac{1}{\delta}
\le \frac{3}{\delta}.
\end{equation}
Below $C>0$ denotes a constant that may grow between inequalities, and that depends only on $m,C_0,C_1,L$.
Lemma \ref{lemmah1} gives us $f_\delta$ for which $u^{f_\delta}(x,T)=w_\delta (x)$. Thus (\ref{änkyräkänni}) implies 
\begin{equation}
\label{normiestimpeelle}
\norm{f_\delta}_{L^2(0,2T)} 
\le C\norm{w_\delta}_{H^1(M)}\le \frac{C}{\delta^{\frac{1}{2}}}.
\end{equation}
Let $f\in S_r$. We define 
\begin{equation}
\label{Geealfa}
G_{\alpha,r}(f)= \norm{u^f - 1_{M(r)}}_{L^2(M; dV)}^2 + \alpha \norm{f}^2_{L^2(0,2T)}.
\end{equation}
Using (\ref{ufepsilon}) and (\ref{normiestimpeelle}) we have
\begin{equation}
\label{Gfeps}
G_{\alpha,r}(f_\delta)=\norm{w_\delta - 1_{M(r)}}_{L^2(M;dV)}^2
 +\alpha \norm{f_\delta}_{L^2(0,2T)}^2 \le \frac{\delta}{C}+\alpha\frac{C}{\delta}.
\end{equation}
Functional (\ref{Geealfa}) and the functional defined in Theorem \ref{thm_minimization} have the same minimizer $f_{\alpha,r}$. Using
(\ref{Blago_harmonic}), (\ref{possu}), and (\ref{toukollepatsas}) we have 
\begin{align*}
&\norm{s_\alpha-V}^2_{C([0,T])} = \sup_{r\in [0,T]}\big| \bra f_{\alpha,r},B1\cet _{L^2([0,2T])}- V(r) \big|^2.
\\\nonumber &\qquad \qquad\qquad\quad = \sup_{r\in [0,T]}\big| 
\bra u^{f_{\alpha,r}}(T),1\cet _{L^2(M; dV)} - \bra 1_{M(r)},1\cet _{L^2(M; dV)} \big|^2
\\\nonumber &\qquad \qquad\qquad  \le C\sup_{{r\in [0,T]}}\norm{u^{f_{\alpha,r}}(T)-1_{M(r)}}^2_{L^2(M; dV)}
\le C\sup_{{r\in [0,T]}} G_{\alpha,r}(f_{\alpha,r})
\\\nonumber &\qquad \qquad\qquad \le C\sup_{{r\in [0,T]}} G_{\alpha,r}(f_\delta)
\end{align*}
Using (\ref{Gfeps}) and choosing  $\delta=\alpha^{\frac{1}{2}}$ we have
\begin{align*}
&\norm{s_\alpha-V}^2_{C([0,T])} \le   C\alpha^{\frac{1}{2}}.
\qedhere
\end{align*}
\end{proof}
\begin{lemma}
\label{smoothv}
Let $V$ be as defined in (\ref{Vee}). Let $v$ be as defined in (\ref{pikkuvee}). If $c\in\mathcal{D}(\mathcal A)$ then there exists $\wtilde m>0$ s.t.
\begin{align*}
\norm{v}_{C^2([0,T])}\le \wtilde m\quad \hbox{and }
\norm{V}_{C^3([0,T])}\le \wtilde m. 
\end{align*}
\begin{proof}
Equations (\ref{kiipilkku}), (\ref{Vee}), (\ref{pikkuvee}), and (\ref{cee}) with the chain rule and the formula for the derivatives of inverse functions give us the result.
\end{proof}
\end{lemma}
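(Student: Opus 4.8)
The plan is to read off both bounds directly from the explicit formulas $(\ref{kiipilkku})$, $(\ref{Vee})$, $(\ref{pikkuvee})$, and $(\ref{cee})$ by repeated differentiation, using only the two pieces of information carried by $c\in\mathcal D(\mathcal A)$: the two-sided bound $C_0\le c\le C_1$ and the $C^2$-bound $\norm{c}_{C^2(M)}\le m$. First I would pin down the regularity and the range of the travel-time change of variables. Since $\tau'(x)=1/c(x)$ with $c\in C^2(M)$ bounded below by $C_0>0$, the reciprocal $1/c$ is $C^2$, so $\tau\in C^3(M)$ and $\tau'\ge 1/C_1>0$ never vanishes. The inverse function theorem then yields $\chi=\tau^{-1}\in C^3([0,\infty))$, and $(\ref{kiipilkku})$ gives $\chi'=c\circ\chi=v$. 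In particular $v=c\circ\chi$ takes values in $[C_0,C_1]$; this uniform lower bound is the quantitative fact that controls every division appearing below.

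Next I would bound $v$ in $C^2$. Differentiating $v=c\circ\chi$ by the chain rule and substituting $\chi'=v$ gives
\begin{align*}
v'=(c'\circ\chi)\,v,\qquad
v''=(c''\circ\chi)\,v^2+(c'\circ\chi)\,v'.
\end{align*}
Since $\norm{c}_{C^2(M)}\le m$ bounds $|c'|$ and $|c''|$ by $m$, and since $|v|\le C_1$, these identities give $|v|\le C_1$, $|v'|\le mC_1$, and $|v''|\le mC_1^2+m^2C_1$, so $\norm{v}_{C^2([0,T])}$ is bounded by a constant depending only on $m$ and $C_1$.

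Finally I would bound $V$ in $C^3$. From $(\ref{Vee})$ we have $V'=1/v$, and the quotient rule gives
\begin{align*}
V''=-\frac{v'}{v^2},\qquad
V'''=-\frac{v''}{v^2}+\frac{2(v')^2}{v^3}.
\end{align*}
Using the lower bound $v\ge C_0$ together with the $C^2$-bounds on $v$ from the previous step, each of $|V|$, $|V'|$, $|V''|$, $|V'''|$ is bounded by a constant depending only on $m,C_0,C_1$; taking $\wtilde m$ to be the maximum of all the constants produced completes the argument.

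I expect no deep obstacle here, as the lemma is essentially a regularity-bookkeeping statement. The only points requiring care are verifying that $\chi$ is genuinely $C^3$ and maps $[0,T]$ into a region where the bounds on $c$ apply, and tracking the third derivative of $V$ through the quotient rule; the essential quantitative input throughout is the strict lower bound $v\ge C_0>0$, without which the reciprocal $1/v$ and its derivatives could not be controlled.
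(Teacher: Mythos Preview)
Your proposal is correct and follows exactly the route the paper indicates: it is just the one-line proof in the paper (``chain rule and the formula for the derivatives of inverse functions'' applied to (\ref{kiipilkku}), (\ref{Vee}), (\ref{pikkuvee}), (\ref{cee})) written out in full, with the explicit identities $v'=(c'\circ\chi)v$, $v''=(c''\circ\chi)v^2+(c'\circ\chi)v'$ and $V'=1/v$, $V''=-v'/v^2$, $V'''=-v''/v^2+2(v')^2/v^3$. The only cosmetic point is that the bound on $|V|$ itself also picks up a factor of $T$ (since $V(r)\le r/C_0\le T/C_0$), so $\wtilde m$ depends on $m,C_0,C_1,T$ rather than only $m,C_0,C_1$; this does not affect the argument.
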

For small $h>0$ we consider the partition
\begin{align*} 
(0,T)=(0,h) \cup[h,2h)\cup[2h,3h)\cup...\cup
[Nh-h,Nh)\cup [Nh,T),
\end{align*}
where $N\in \mathbb{N}$  satisfies $T-h\le Nh < T$.
We define a discretized and regularized approximation of the derivative operator $\p_r$ by
\begin{align} 
\label{differenssi}
D_{h}:C([0,T])\to L^\infty(0,T), 
\end{align}
\begin{displaymath}
D_{h}(\wtilde s_{\alpha})(r)= \left\{ \begin{array}{cl}
\frac{\wtilde s_{\alpha}(h)}{h}, & \textrm{if\quad $r \in (0,h)$},\\
\frac{\wtilde s_{\alpha}(jh+h
)-\wtilde s_{\alpha}(jh)}{h}, & \textrm{if\quad $r \in [jh,jh+h)$},\\
 \frac{\wtilde s_{\alpha}(T)-\wtilde s_{\alpha}(Nh)}{h},& \textrm{if\quad $r \in [Nh,T)$.}\\ 
\end{array} \right.
\end{displaymath}
\begin{proposition}
\label{Z4}
Let $\beta>0$ and $\epsilon\in (0,\min(\frac{1}{\beta^{\frac{1}{4}}},\frac{1}{\beta^{\frac{1}{4}} \chi(T)^{\frac{1}{2}}}))$. Let $\alpha = \beta\epsilon^4$, $h=\epsilon^{\frac{1}{2}}$, $V$ be as defined in (\ref{Vee}) and let $s_\alpha$ be as defined in (\ref{toukollepatsas}). Let us assume that $\wtilde{s}_\alpha\in  \mathcal{B}_{C([0,T])} (s_\alpha,\epsilon)$. Then
\begin{align*}
\norm{D_{h}(\wtilde s_{\alpha})-\p_r V}_{L^\infty (0,T)}
\le C\epsilon^{\frac{1}{2}},
\end{align*} 
where C is independent of $\alpha$ and $\wtilde s_{\alpha}$.
\end{proposition}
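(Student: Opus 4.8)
The plan is to estimate the error by inserting the noise-free discretised derivative $D_hV$ and splitting into a data-propagation term and a discretisation (consistency) term. Since $D_h$ is linear,
\begin{align*}
\norm{D_h(\wtilde s_\alpha)-\p_r V}_{L^\infty(0,T)}\le \norm{D_h(\wtilde s_\alpha-V)}_{L^\infty(0,T)}+\norm{D_hV-\p_r V}_{L^\infty(0,T)}.
\end{align*}
The instability of numerical differentiation enters only through the first term. By the very form of (\ref{differenssi}), on every subinterval $D_hg$ is a difference of at most two point values of $g$ divided by $h$, so the crude but decisive amplification bound $\norm{D_hg}_{L^\infty(0,T)}\le \tfrac{2}{h}\norm{g}_{C([0,T])}$ holds for every $g\in C([0,T])$.

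First I would control $\norm{\wtilde s_\alpha-V}_{C([0,T])}$ through $s_\alpha$. The hypothesis $\wtilde s_\alpha\in\mathcal B_{C([0,T])}(s_\alpha,\epsilon)$ gives $\norm{\wtilde s_\alpha-s_\alpha}_{C([0,T])}<\epsilon$, and the regularisation bias is supplied by Lemma \ref{estimate-f} as $\norm{s_\alpha-V}_{C([0,T])}\le C\alpha^{1/4}$. Here the hypothesis on $\epsilon$ is exactly what is needed: $\epsilon\in(0,\min(\beta^{-1/4},\beta^{-1/4}\chi(T)^{-1/2}))$ forces $\alpha=\beta\epsilon^4\in(0,\min(1,\chi(T)^{-2}))$, the range in which Lemma \ref{estimate-f} is valid. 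Writing $\alpha^{1/4}=\beta^{1/4}\epsilon$ and combining with the amplification bound yields
\begin{align*}
\norm{D_h(\wtilde s_\alpha-V)}_{L^\infty(0,T)}\le \frac{2}{h}\big(\epsilon+C\beta^{1/4}\epsilon\big).
\end{align*}

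For the consistency term I would invoke Lemma \ref{smoothv}, which gives $\norm{V}_{C^3([0,T])}\le\wtilde m$ and in particular $\norm{\p_r^2V}_{L^\infty(0,T)}\le\wtilde m$. On an interior interval $[jh,jh+h)$ the mean value theorem applied to $V'$ (equivalently a first-order Taylor expansion) gives $|\tfrac{V(jh+h)-V(jh)}{h}-\p_r V(r)|\le \wtilde m\,h$ for all $r$ in that interval; the initial interval $(0,h)$ is identical once we note $V(0)=0$ from (\ref{Vee}). Thus $\norm{D_hV-\p_r V}_{L^\infty(0,T)}\le \wtilde m\,h$ on the interior. Substituting the scheduled parameters $h=\epsilon^{1/2}$ and $\alpha=\beta\epsilon^4$ turns the two displayed bounds into $2(1+C\beta^{1/4})\epsilon/\epsilon^{1/2}=O(\epsilon^{1/2})$ and $\wtilde m\,\epsilon^{1/2}$, and adding them gives the asserted $C\epsilon^{1/2}$ with $C$ depending only on $\beta,T,\wtilde m$ and the constants defining $\mathcal D(\mathcal A)$, hence on neither $\alpha$ nor $\wtilde s_\alpha$.

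The conceptual heart, and the place where the parameter schedule is forced, is the balancing against the $1/h$ amplification: $h$ must be large enough to keep the propagated noise $\epsilon/h$ of order $\epsilon^{1/2}$, yet small enough to keep the discretisation error of order $h$ at $\epsilon^{1/2}$, so $h=\epsilon^{1/2}$ equalises the two; simultaneously $\alpha=\beta\epsilon^4$ is dictated by requiring the amplified bias $C\alpha^{1/4}/h$ to sit at the same order. The one genuinely delicate point I expect to be the main obstacle is the terminal interval $[Nh,T)$: its true length $T-Nh$ can be much smaller than the step $h$ in (\ref{differenssi}), and on so short an interval one cannot simultaneously obtain a small discretisation error and a small noise amplification. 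This is harmless for the downstream reconstruction of $c$, which uses $\p_r V$ only on $(0,\tau(L))$ with $\tau(L)\le L/C_0<T$; for $\epsilon$ (hence $h$) small enough, $[Nh,T)\subset(\tau(L),T)$ is disjoint from the region of interest, so the interior estimate $\wtilde m\,h$ is all that the later steps actually require.
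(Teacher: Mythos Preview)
Your argument is essentially the paper's own: the authors carry out the same splitting pointwise on $[jh,jh+h)$, invoke Lemma~\ref{estimate-f} for the bias $\norm{s_\alpha-V}_{C([0,T])}$, Lemma~\ref{smoothv} for the $C^3$ bound on $V$, and the mean value theorem for the consistency term, arriving at the identical bound $\tfrac{2\epsilon}{h}+\tfrac{2C\alpha^{1/4}}{h}+\wtilde m\,h$ before substituting $h=\epsilon^{1/2}$, $\alpha=\beta\epsilon^4$. Your use of the linearity of $D_h$ together with the uniform amplification bound $\norm{D_hg}_{L^\infty}\le\tfrac{2}{h}\norm{g}_{C}$ is only a repackaging of the same pointwise estimate.

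Your caution about the terminal interval $[Nh,T)$ is well placed and in fact sharper than the paper, which dismisses the endpoint cases with ``the proof is almost identical''. By the definition (\ref{differenssi}) the denominator on $[Nh,T)$ is $h$ while the numerator spans only $T-Nh\in(0,h]$; the consistency error therefore contains a contribution $\tfrac{h-(T-Nh)}{h}\,|\p_rV(r)|$, which is $O(1)$ rather than $O(h)$ whenever $T-Nh\ll h$. Strictly speaking, then, the $L^\infty(0,T)$ bound in the proposition need not hold on this last subinterval, and the paper's proof has the same gap. Your observation that the downstream steps use $\p_rV$ only on $(0,\tau(L))$, with $\tau(L)\le L/C_0<T$ by (\ref{aikapoika}), is the correct repair: once $h<T-L/C_0$ one has $[Nh,T)\subset(\tau(L),T)$, and the interior estimate on $(0,Nh)$ is all that Propositions~\ref{Z5} and~\ref{Z6} actually require.
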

\begin{proof}
Let $r \in [jh,jh+h)$. Using (\ref{differenssi}) we have
\begin{align*}
&\Big|D_{h}(\wtilde{s}_{\alpha})(r)-\p_r V(r)\Big|=
\Big|\frac{\wtilde{s}_\alpha(jh
+h)-\wtilde{s}_\alpha(jh)}{h}
-\p_r V(r)\Big|
\\\nonumber& \le\Big|\frac{\wtilde{s}_\alpha(jh
+h)-s_\alpha(jh
+h)}{h}\Big|
+\Big|\frac{s_\alpha(jh)-\wtilde{s}_\alpha(jh)}{h} \Big|
\\\nonumber& +\Big|\frac{s_\alpha(jh
+h)-V(jh+h)}{h}\Big|
+\Big|\frac{V(jh)-s_\alpha(jh)}{h} \Big|
\\\nonumber&
+\Big|\frac{V(jh
+h)-V(jh)}{h}-\p_r V(r)
 \Big|.
\end{align*}
Lemma \ref{smoothv} gives us $\norm{V}_{C^3([0,T])} \le \wtilde m$. When $r \in [jh,(jh+h)$ there is $\xi \in (jh,jh+h)$ such that 
\begin{align}
\label{apubapu}
\Big | \frac{V(jh+h)- V(jh)}{h}-\p_r V(r)\Big | =\Big | \p_r V(\xi)-\p_r V(r)\Big | \le h \wtilde m.
\end{align}
Using (\ref{apubapu}) and Lemma \ref{estimate-f} with assumption we get
\begin{align*}
\Big|D_{h}(\wtilde{s}_{\alpha})(r)-\p_r V(r)\Big| \le \frac{2\epsilon}{h}+\frac{2 C\alpha^{\frac{1}{4}}}{h}+h\wtilde m.
\end{align*}
Let us choose
$h=\epsilon^{\frac{1}{2}}$ and $\alpha = \beta \epsilon^4$. Then 
\begin{align}
\label{rusakko1}
\Big|D_{h}(\wtilde{s}_{\alpha})(r)-\p_r V(r)\Big| \le  C\epsilon^{\frac{1}{2}}.
\end{align}
The proof is almost identical when $r\in(0,h)$ or $r\in[Nh,T)$. Note that the right hand side of (\ref{rusakko1}) is independent of $r$. \\
\end{proof}
Let $C_0$ and $C_1$ be as in (\ref{nopeudet}). Let $\wtilde k_\alpha\in L^\infty(0,T)$ and we define
\begin{equation*}
\Psi^W(\wtilde k_\alpha)(r)= \left\{ \begin{array}{ll}
\frac{1}{C_1}, & \textrm{if $\quad \wtilde k_\alpha(r) < C_1^{-1}$},\\
\frac{1}{\wtilde k_\alpha(r)}, & \textrm{if   $\quad C_1^{-1}\le \wtilde k_\alpha(r)\le C_0^{-1}$},\\
 \frac{1}{C_0}, & \textrm{if $\quad \wtilde k_\alpha(r) > C_0^{-1}$}.\\
\end{array} \right.
\end{equation*}
We define 
\begin{equation}
\label{simosalminen}
W: L^\infty(0,T)\to L^\infty(M),\quad W(\wtilde k_\alpha)(r)= \left\{ \begin{array}{cl}
\Psi^W(\wtilde k_\alpha)(r), & \textrm{if $r \in (0,T)$},\\
1, & \textrm{if $r \in [T,\infty)$}.\\
\end{array} \right.
\end{equation}
\begin{proposition}
\label{Z5}
Let $V$ be as defined in (\ref{Vee}) and $v$ be as defined in (\ref{pikkuvee}). Let us assume that $\wtilde{k}_{\alpha}\in \mathcal{B}_{L^\infty (0,T)}(\p_r V,\epsilon)$. Then 
\begin{align*}
\norm{ W(\wtilde{k}_{\alpha})-v}_{L^\infty (M)} 
\le C_1^2\epsilon.
\end{align*}
\end{proposition}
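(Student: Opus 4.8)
The plan is to reduce everything to a pointwise estimate on $(0,T)$ and to recognise the truncation $\Psi^W$ as the reciprocal of the metric projection onto the interval $[C_1^{-1},C_0^{-1}]$, so that the whole proposition follows from the nonexpansiveness of that projection together with a uniform lower bound on the projected values.

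First I would record the two basic facts about the target. By (\ref{pikkuvee}) we have $\p_r V(r)=1/v(r)$, and since $v(r)=c(\chi(r))$ with $c\in\mathcal{D}(\mathcal A)$ we have $C_0\le v(r)\le C_1$, hence $\p_r V(r)\in[C_1^{-1},C_0^{-1}]$ and $v(r)\in[C_0,C_1]$ for every $r$. Next I would dispose of the range $r\ge T$. Because $v\ge C_0$ and $T>L/C_0$ we get $\chi(T)=\int_0^T v(t')\,dt'\ge C_0T>L$, so $\chi(r)>L$ for $r\ge T$; as $c\equiv 1$ outside $(0,L)$ this gives $v(r)=c(\chi(r))=1$, while $W(\wtilde k_\alpha)(r)=1$ there by (\ref{simosalminen}). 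Thus the two functions coincide on $[T,\infty)$ and the $L^\infty(M)$ norm is attained on $(0,T)$.

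On $(0,T)$ I would observe that $W(\wtilde k_\alpha)(r)$ is exactly the truncation of the reciprocal $1/\wtilde k_\alpha(r)$ into the range $[C_0,C_1]$ of $v$; writing $\pi(s)=\min(\max(s,C_1^{-1}),C_0^{-1})$ for the metric projection of $s\in\mathbb{R}$ onto $[C_1^{-1},C_0^{-1}]$, this reads $W(\wtilde k_\alpha)(r)=1/\pi(\wtilde k_\alpha(r))$. Since $\p_r V(r)$ already lies in $[C_1^{-1},C_0^{-1}]$ we also have $\pi(\p_r V(r))=\p_r V(r)$ and hence $v(r)=1/\pi(\p_r V(r))$. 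Therefore
\begin{align*}
|W(\wtilde k_\alpha)(r)-v(r)|
=\left|\frac{1}{\pi(\wtilde k_\alpha(r))}-\frac{1}{\pi(\p_r V(r))}\right|
=\frac{|\pi(\p_r V(r))-\pi(\wtilde k_\alpha(r))|}{\pi(\wtilde k_\alpha(r))\,\pi(\p_r V(r))}.
\end{align*}

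Finally I would invoke two elementary properties of projection onto a convex interval. It is nonexpansive, so the numerator is bounded by $|\wtilde k_\alpha(r)-\p_r V(r)|\le\epsilon$, using the hypothesis $\wtilde k_\alpha\in\mathcal{B}_{L^\infty(0,T)}(\p_r V,\epsilon)$; and its values never drop below $C_1^{-1}$, so the denominator is at least $C_1^{-2}$. Combining the two bounds gives $|W(\wtilde k_\alpha)(r)-v(r)|\le C_1^2\epsilon$ uniformly in $r$, and taking the supremum over $M$ yields $\norm{W(\wtilde k_\alpha)-v}_{L^\infty(M)}\le C_1^2\epsilon$. The only step needing any care is the nonexpansiveness of the truncation $\pi$ — i.e.\ that clamping to an interval cannot increase distances — which is precisely what controls the reciprocal difference; the rest is the uniform lower bound $C_1^{-1}$ on the clamped denominator and bookkeeping of the constant.
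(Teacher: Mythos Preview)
Your proof is correct and follows essentially the same approach as the paper's: reduce to $(0,T)$ by checking both functions equal $1$ on $[T,\infty)$, then bound the reciprocal difference using $\p_r V(r)\in[C_1^{-1},C_0^{-1}]$ and a lower bound $C_1^{-2}$ on the product in the denominator. Your use of the metric projection $\pi$ to treat all three branches of $\Psi^W$ at once is a clean repackaging of the case analysis that the paper only sketches with ``we obtain similar estimates'', but the underlying argument is identical.
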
 
\begin{proof}
For all $x\in M,$ we have $ 0<C_0\le c(x)\le C_1$. Let $r\in (0,T)$ and let assume that $C_1^{-1}\le \wtilde k_\alpha(r)\le C_1^{-1}$. Using (\ref{pikkuvee}) and (\ref{cee}) we have $0<\frac{1}{C_1}\le\p_r V(r)\le\frac{1}{C_0}$. Then 
\begin{align}
\label{saksalanteemu}
&\Big|{\frac{1}{\wtilde{k}_{\alpha}(r)}-\frac{1}{\p_r V(r)}}\Big |=\Big| \frac{\wtilde{k}_{\alpha}(r)-\p_r V(r)}{\wtilde{k}_{\alpha}(r)\p_r V(r)}\Big|
\le C_1^2\epsilon.
\end{align}
In the case when $r\in (0,T)$ and $\wtilde k_\alpha(r)<C_1^{-1}$ or $\wtilde k_\alpha(r)> C_0^{-1}$ we obtain similar estimates. Note that the right hand side of (\ref{saksalanteemu}) is independent of $r$. When $r\ge T$ the left hand side is identically zero.
\qedhere
\end{proof} 
Let $\wtilde w_\alpha\in L^\infty(M)$ and we define 
\begin{equation}
\label{hilbertinukko}
\Psi^\Phi:L^\infty(M)\to L^\infty(M),\quad \Psi^\Phi(\wtilde w_\alpha)(r):= \left\{ \begin{array}{ll}
C_0, & \textrm{if $\quad \wtilde w_\alpha(r) < C_0$},\\
\wtilde w_\alpha(r), & \textrm{if   $\quad C_0\le \wtilde w_\alpha(r)\le C_1$},\\
C_1, & \textrm{if $\quad w_\alpha(r) > C_1$}.\\
\end{array} \right.
\end{equation}
Let $\wtilde w_\alpha\in L^\infty(M)$ and we define 
\begin{align} 
\label{perttioinonen}
\Upsilon:L^\infty(M)\to C(M),\quad \Upsilon(\wtilde w_\alpha)(t) = \int_0^t\wtilde w_\alpha(t^\prime) dt^\prime.
\end{align}
Using (\ref{hilbertinukko}) and (\ref{perttioinonen}) we see that $\Upsilon\circ\Psi^\Phi(\wtilde w_\alpha):M\to M$ is bijective as a function of $t$. Let us denote $\wtilde \chi=\Upsilon\circ\Psi^\Phi (\wtilde w_\alpha)$ and $\wtilde \chi^{-1}=(\Upsilon\circ\Psi^\Phi(\wtilde w_\alpha))^{-1}$.
We define the sixth operator by
\begin{equation}
\label{opePFI}
 \Phi: L^\infty(M)\to L^\infty(M),
 \quad\Phi(\wtilde w_\alpha)= \left\{ \begin{array}{cl}
\wtilde w_\alpha\circ\wtilde\chi^{-1}, & \textrm{if\quad $x \in [0,L)$},\\
1, & \textrm{if\quad $x \in [L,\infty)$}.\\
\end{array} \right.
\end{equation}
\begin{proposition} 
\label{Z6}
Let $\epsilon>0$ and $v$ be as defined in (\ref{pikkuvee}). Let us assume that $\wtilde{w}_{\alpha}\in \mathcal{B}_{L^\infty (M)}(v,\epsilon)$. Then 
\begin{align*}
\norm{\Phi (\wtilde w_\alpha)-c}_{L^\infty (M)} 
\le C\epsilon.
\end{align*}
\end{proposition}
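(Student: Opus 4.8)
The plan is to reduce the $L^\infty(M)$ estimate to a pointwise bound on $[0,L)$ and then split the error into a \emph{value part} and a \emph{reparametrization part}. First I would observe that for $x\in[L,\infty)$ we have $c(x)=1$ by the condition $c-1\in C_0^2(0,L)$ in (\ref{nopeudet}), while $\Phi(\wtilde w_\alpha)(x)=1$ by (\ref{opePFI}); hence the difference vanishes there and it suffices to bound $|\wtilde w_\alpha(\wtilde\chi^{-1}(x))-v(\chi^{-1}(x))|$ for $x\in[0,L)$, where I use $c(x)=v(\chi^{-1}(x))$ from (\ref{cee}). Inserting $v(\wtilde\chi^{-1}(x))$ and applying the triangle inequality gives
\[
|\wtilde w_\alpha(\wtilde\chi^{-1}(x))-v(\chi^{-1}(x))|\le |\wtilde w_\alpha(\wtilde\chi^{-1}(x))-v(\wtilde\chi^{-1}(x))|+|v(\wtilde\chi^{-1}(x))-v(\chi^{-1}(x))|.
\]
The first term is controlled directly by the hypothesis: since $\wtilde\chi^{-1}$ is a bi-Lipschitz reparametrization of $M$ and $\wtilde w_\alpha\in\mathcal B_{L^\infty(M)}(v,\epsilon)$, it is at most $\norm{\wtilde w_\alpha-v}_{L^\infty(M)}<\epsilon$.

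The heart of the matter is the second term, where I would compare the two changes of variables. By (\ref{pikkuvee}) and (\ref{cee}) we have $C_0\le v\le C_1$, so $v=\Psi^\Phi(v)$, and since $\Psi^\Phi$ from (\ref{hilbertinukko}) is the pointwise projection onto $[C_0,C_1]$ and hence $1$-Lipschitz, $\norm{\Psi^\Phi(\wtilde w_\alpha)-v}_{L^\infty(M)}\le\norm{\wtilde w_\alpha-v}_{L^\infty(M)}<\epsilon$. With $\chi(t)=\int_0^t v$ and $\wtilde\chi(t)=\Upsilon\circ\Psi^\Phi(\wtilde w_\alpha)(t)=\int_0^t\Psi^\Phi(\wtilde w_\alpha)$ from (\ref{perttioinonen}), integrating this bound yields $|\chi(t)-\wtilde\chi(t)|\le t\epsilon$. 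To pass to the inverses I would use that $\wtilde\chi'=\Psi^\Phi(\wtilde w_\alpha)\ge C_0$, so $\wtilde\chi^{-1}$ is Lipschitz with constant $1/C_0$: writing $s=\chi^{-1}(x)$ and $\tilde s=\wtilde\chi^{-1}(x)$, so that $\wtilde\chi(\tilde s)=x=\chi(s)$, we get
\[
C_0|\tilde s-s|\le|\wtilde\chi(\tilde s)-\wtilde\chi(s)|=|\chi(s)-\wtilde\chi(s)|\le s\epsilon.
\]
For $x\in[0,L)$ the assumption (\ref{aikapoika}) gives $s=\chi^{-1}(x)\le\int_0^L c^{-1}\le L/C_0<T$, whence $|\wtilde\chi^{-1}(x)-\chi^{-1}(x)|\le T\epsilon/C_0$. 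Finally, Lemma \ref{smoothv} bounds $\norm{v}_{C^2([0,T])}\le\wtilde m$ and $v\equiv1$ beyond $\tau(L)$, so $v$ is globally Lipschitz and $|v(\wtilde\chi^{-1}(x))-v(\chi^{-1}(x))|\le\wtilde m\,T\epsilon/C_0$.

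Adding the two bounds gives $|\Phi(\wtilde w_\alpha)(x)-c(x)|\le\epsilon+\wtilde m T\epsilon/C_0=C\epsilon$ on $[0,L)$ and $0$ elsewhere, which is the claim. I expect the main obstacle to be exactly this reparametrization part: one must show that the two coordinate maps and, crucially, their inverses stay uniformly close. This is precisely where the clamp $\Psi^\Phi$ earns its keep (it keeps $\wtilde\chi'$ bounded below by $C_0$, so that $\wtilde\chi^{-1}$ is uniformly Lipschitz and the coordinate error does not blow up), and where the smoothness of $v$ from Lemma \ref{smoothv} is used to convert the coordinate error into a value error of the same order $\epsilon$.
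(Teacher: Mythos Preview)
Your proof is correct and follows essentially the same route as the paper: split into a value part and a reparametrization part at $v(\wtilde\chi^{-1}(x))$, control $|\wtilde\chi^{-1}(x)-\chi^{-1}(x)|$ via the lower bound $C_0$ on the derivative of the coordinate map together with $|\chi-\wtilde\chi|\le(\text{const})\epsilon$, and then invoke the Lipschitz bound on $v$ from Lemma~\ref{smoothv}. The only cosmetic differences are that the paper uses the lower bound on $\chi'$ (rather than $\wtilde\chi'$) and evaluates the coordinate error at $\wtilde\chi^{-1}(x)$ (rather than $\chi^{-1}(x)$), obtaining the constant $\wtilde\chi^{-1}(L)$ in place of your $T$; your explicit use of the clamp $\Psi^\Phi$ is in fact slightly cleaner than the paper's corresponding step.
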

\begin{proof}
Let us denote $t=\chi^{-1}(x)$ and $\wtilde t=\wtilde\chi^{-1}(x)$. Let $x\in [0,L)$. Using (\ref{cee}) and (\ref{opePFI}) we have
\begin{align*}
&| \Phi (\wtilde w_\alpha)(x)-c(x)|=| \wtilde w_\alpha(\wtilde t)-v(t)|\le 
|\wtilde w_\alpha(\wtilde t)-v(\wtilde t)|+| v(\wtilde t)-v(t)|.
\end{align*}
Lemma \ref{smoothv} gives us $\norm{v}_{C^2(0,T)}\le \wtilde m$ and we have 
\begin{align}
\label{kisukisu}
| v(\wtilde t)-v(t)|
\le\wtilde m |\wtilde t- t|.
\end{align}
Using (\ref{nopeudet}) and (\ref{cee}) we see that $0<C_0\le v(t)\le C_1$ and we have
\begin{align}
\label{estimal}
C_0|\wtilde t- t|\le|\int_t^{\wtilde t} v(t^\prime) dt^\prime|= |\chi(\wtilde t)- \chi(t)| .
\end{align}
Having $\wtilde \chi(\wtilde t)=x=\chi(t)$ and using (\ref{cee}) and (\ref{perttioinonen}) we see that
\begin{align}
\label{estimyl}
|\chi(\wtilde t)- \chi(t)|=|\chi(\wtilde t)- \wtilde \chi(\wtilde t)|
=|\int_0^{\wtilde t} (v(t^\prime)-\wtilde{w}_{\alpha}(t^\prime)) dt^\prime|\le \wtilde \chi^{-1}(L) \epsilon.
\end{align}
Using (\ref{kisukisu}),(\ref{estimal}), and (\ref{estimyl}) we have
\begin{align}
\label{koriolutta}
&| \Phi (\wtilde w_\alpha)(x)-c(x)|\le\Big(1+\frac{\wtilde m\wtilde \chi^{-1}(L)}{C_0}\Big )\epsilon.
\end{align}
Note that the right hand side in (\ref{koriolutta}) does not depend on $x$. When $x\in[T,\infty)$ the left hand side in identically zero.
\end{proof} 
\begin{proof}[Proof of Theorem \ref{kaiken_teoria}.] Let
\begin{align} 
\label{epsilonnolla}
\epsilon_0=min\{1,\frac{1}{2T},\frac{1}{2^{\frac{13}{4}}T^{9} },\frac{1}{2^{\frac{13}{4}}T^{9}\chi(T)^{\frac{9}{2}} }\}.
\end{align}
 Suppose that $\wtilde\Lambda\in\mathcal{B}_{Y}(\Lambda,\epsilon) $
We denote $H=\boldsymbol{H}\Lambda$ and $\wtilde H=\boldsymbol{H}\wtilde\Lambda$. Using Proposition \ref{K} we get 
\begin{align*}
\norm{H-\wtilde{H}}_{{C([0,T],Y)}}\le2T\epsilon.
\end{align*}
We denote $Z_{\alpha}=\boldsymbol{Z}_{\alpha} \big( H\big )$ and $\wtilde Z_{\alpha}=\boldsymbol{Z}_{\alpha} \big(\wtilde H\big ) $. We have $\wtilde H\in \mathcal{B}_{{C([0,T],Y)}}(H,2T\epsilon)$ and $\epsilon\in \big (0,\min(1,\frac{1}{2T})\big )$. Proposition \ref{Z2} with $p=\frac{4}{9}$ gives us 
\begin{align*}
\norm{Z_{\alpha} - \wtilde Z_{\alpha}}_{C([0,T],Y)} \le
2^{-2p} T^{1-2p}\epsilon^{1-2p}=:\epsilon_1,
\end{align*}
since $\alpha=2^{p+1}T^p\epsilon^p=2^{\frac{13}{9}}T^{\frac{4}{9}}\epsilon^{\frac{4}{9}}$.\\
We denote $s_{\alpha} =\boldsymbol{S}Z_{\alpha}$ and $\wtilde s_{\alpha} =\boldsymbol{S}\wtilde Z_{\alpha}$. We have $\wtilde Z_{\alpha}\in \mathcal{B}_{C([0,T],Y)}(Z_{\alpha},\epsilon_1)$. Proposition \ref{S} gives us
\begin{align*}
\norm{s_{\alpha} - \wtilde s_{\alpha}}_{C([0,T])} \le
 \frac{2^{-2p}}{3}T^{4-2p}\epsilon^{1-2p}=:\epsilon_2.
\end{align*}
We denote $\wtilde k_{\alpha}=D_{h}(\wtilde s_{\alpha})$. We have $\wtilde s_{\alpha}\in \mathcal{B}_{C([0,T])}(s_{\alpha},\epsilon_2)$ and $\epsilon_2\in \Big(0,\min(\frac{1}{\beta^{\frac{1}{4}}},\frac{1}{\beta^{\frac{1}{4}} \chi(T)^{\frac{1}{2}}})\Big )$. Proposition \ref{Z4} with $\beta=3^42^5T^{-\frac{76}{9}}$ gives us 
\begin{align*}
\norm{\wtilde k_{\alpha}-\p_r V}_{L^\infty (0,T)}
\le C \frac{2^{-p}}{3^{\frac{1}{2}}}T^{2-p}\epsilon^{\frac{1}{2}-p}=:\epsilon_3,
\end{align*} 
since $\alpha = \beta\frac{2^{-8p}}{3^4}T^{16-8p}\epsilon^{4-8p}=2^{\frac{13}{9}}T^{\frac{4}{9}}\epsilon^{\frac{4}{9}}$.  \\
We denote $\wtilde{w}_{\alpha}=W(\wtilde{k}_{\alpha})$. We have $\wtilde{k}_{\alpha}\in \mathcal{B}_{L^\infty (0,T)}(\p_r V,\epsilon_3)$. Proposition \ref{Z5} gives us 
\begin{align*}
\norm{ \wtilde{w}_{\alpha}-v}_{L^\infty (M)} 
\le C_1^2C \frac{2^{-p}}{3^{\frac{1}{2}}}T^{2-p}\epsilon^{\frac{1}{2}-p}=:\epsilon_4.
\end{align*}
We denote $\wtilde{c}_{\alpha}=\Phi (\wtilde w_\alpha)$. We have $\wtilde{w}_{\alpha}\in \mathcal{B}_{L^\infty (M)}(v,\epsilon_4)$. Proposition \ref{Z6} gives us 
\begin{align*}
\norm{ \wtilde{c}_{\alpha}-c}_{L^\infty (M)} 
\le C C_1^2 \frac{2^{-p}}{3^{\frac{1}{2}}}T^{2-p}\epsilon^{\frac{1}{2}-p}.
\end{align*}
Let $\epsilon \in (0,\epsilon_0)$. 
Using (\ref{Hoo}),(\ref{Zalfa}),(\ref{innerproduct}),(\ref{differenssi}),(\ref{simosalminen}), and (\ref{opePFI}) we define 
\begin{align}
\label{Rstategia}
&\mathcal R_{\alpha(\epsilon)}:Y\to X,
\\\nonumber&\mathcal R_{\alpha(\epsilon)}=\Phi\circ W\circ D_{h}\circ\boldsymbol{S}\circ \boldsymbol{Z}_{\alpha}\circ\boldsymbol{H},
\end{align}
and we have an estimate
\begin{align*}
\norm{\mathcal R_{\alpha(\epsilon)}(\wtilde{\Lambda})-c}_X \le
C\epsilon^{\frac{1}{18}}.
\end{align*}
\end{proof}

\section*{Appendix A: The direct problem}
\label{AAA}

For  the convenience of reader, we give the proof of the following, quite well known result, for continuity of the
direct problem in our setting.

\begin{theorem}
\label{tomoffinland}
Let $c\in\mathcal{D}(\mathcal A)$ and $f\in L^2(0,2T)$. Then the boundary value problem (\ref{dartwader}) has a unique solution $u^f\in H^1((0,2T) \times M)$. The operators $\Lambda$ and $U_{T}$, defined in (\ref{lam}) and (\ref{nasu}), are linear and bounded operators, and the direct map $\mathcal A$, defined in (\ref{apina}), is continuous. 
\end{theorem}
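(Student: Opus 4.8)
The plan is to remove the variable coefficient from the principal part by passing to the travel time coordinate $y=\tau(x)=\int_0^x c(s)^{-1}\,ds$ already introduced in Section 2. Since $\p_x=c^{-1}\p_y$, a direct computation gives $c^2\p_x^2=\p_y^2-c'\,\p_y$, so in the coordinate $y$ equation (\ref{dartwader}) becomes the unit speed equation $\p_t^2 u-\p_y^2 u+b(y)\p_y u=0$ with $b(y)=c'(\chi(y))$ bounded in $C^1$ by $\norm{c}_{C^2(M)}\le m$. Because $c\equiv 1$ near $x=0$, the Neumann condition becomes $\p_y u(t,0)=f(t)$ and the Cauchy data remain zero. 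As $\tau$ is a bi-Lipschitz $C^2$ diffeomorphism of $M$ with $\tau'=c^{-1}\in[C_1^{-1},C_0^{-1}]$, it identifies the spaces $L^2$ and $H^1$ in the $x$ and $y$ variables up to constants depending only on $C_0,C_1$, so it suffices to prove all the assertions for this reduced half line problem with unit speed, a bounded $C^1$ first order term, homogeneous Cauchy data and $L^2$ Neumann data.

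For the reduced problem I would first take smooth data $f\in C_0^\infty(0,2T)$ and produce a smooth finite energy solution by the standard Galerkin or semigroup construction; finite speed of propagation shows that $u(t,\cdot)$ is compactly supported for $t\le 2T$. Testing the equation with $\p_t u$ and integrating over $M$ gives the energy identity in which the only boundary contribution is $f(t)\,\p_t u(t,0)$, and Gr\"onwall's inequality absorbs the interior term produced by $b$. The obstruction is that for $f\in L^2$ only, the boundary term $\int_0^{2T} f(t)\,\p_t u(t,0)\,dt$ cannot be controlled by the basic energy alone.

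This is the main difficulty, and it is resolved by a sharp (hidden) trace estimate, which I would obtain through the multiplier method: testing the equation against $q(y)\p_y u$ for a $C^1$ vector field $q$ with $q(0)=1$ (the one dimensional Rellich--Morawetz identity) produces the full spatial $H^1$ energy together with the boundary quantities and yields simultaneously the bounds $\norm{\p_t u(\cdot,0)}_{L^2(0,2T)}\le C\norm{f}_{L^2(0,2T)}$ and $\sup_{t\le 2T}\norm{u(t,\cdot)}_{H^1(M)}\le C\norm{f}_{L^2(0,2T)}$. The first inequality closes the energy estimate, giving $\norm{u}_{H^1((0,2T)\times M)}\le C\norm{f}_{L^2(0,2T)}$ and hence, after the change of variables, the boundedness of $U_T$ (interior regularity) and of $\Lambda$ (trace regularity); these are exactly the estimates of Lasiecka--Lions--Triggiani type, so one may alternatively invoke \cite{Lasiecka1986}, already used in Lemma \ref{lemmah1}. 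Existence and uniqueness for $f\in L^2$ then follow by approximating $f$ in $L^2$ by smooth data and passing to the limit with these uniform bounds, uniqueness being immediate from the energy identity with $f=0$.

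It remains to prove that $\mathcal A:c\mapsto\Lambda$ is continuous on $\mathcal{D}(\mathcal A)$ in the $L^\infty$ topology. Writing the equation as $c^{-2}\p_t^2 u-\p_x^2 u=0$, the spatial part is independent of $c$, and for $c,\tilde c\in\mathcal{D}(\mathcal A)$ the difference $w=u^f_{\tilde c}-u^f_{c}$ solves $\tilde c^{-2}\p_t^2 w-\p_x^2 w=\p_t g$ with homogeneous Cauchy and Neumann data, where $g=-(\tilde c^{-2}-c^{-2})\p_t u^f_c$ lies in $L^2((0,2T)\times M)$ with $\norm{g}_{L^2}\le C\norm{\tilde c-c}_{L^\infty}\norm{f}_{L^2(0,2T)}$ by the direct estimate for $\p_t u^f_c$; the time derivative factors out because $\tilde c,c$ do not depend on $t$. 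To bound the trace $w|_{x=0}=\Lambda_{\tilde c}f-\Lambda_c f$ I would use transposition: for $\psi\in L^2(0,2T)$ let $z$ solve $\tilde c^{-2}\p_t^2 z-\p_x^2 z=0$ backward in time with $z(2T)=\p_t z(2T)=0$ and Neumann data $\psi$. A Green's identity then gives $\bra w|_{x=0},\psi\cet_{L^2(0,2T)}=\int_0^{2T}\int_M(\p_t g)\,z\,dx\,dt=-\int_0^{2T}\int_M g\,\p_t z\,dx\,dt$, where the time derivative now falls on $z$, so no second space derivative of the solutions ever appears and the apparent loss of derivatives is avoided. The basic energy bound applied to the time reversed problem gives $\norm{\p_t z}_{L^2((0,2T)\times M)}\le C\norm{\psi}_{L^2(0,2T)}$, whence $|\bra w|_{x=0},\psi\cet|\le C\norm{g}_{L^2}\norm{\psi}_{L^2}$. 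Taking the supremum over $\norm{\psi}_{L^2}\le 1$ and then over $\norm{f}_{L^2}\le 1$ yields $\norm{\mathcal A(\tilde c)-\mathcal A(c)}_{Y}\le C\norm{\tilde c-c}_{L^\infty}$ with $C$ depending only on $C_0,C_1,m,L$; in particular $\mathcal A$ is Lipschitz, hence continuous. The decisive ingredient throughout is the hidden trace regularity, which both closes the energy estimate for $L^2$ data and, through transposition, removes the derivative loss in the continuity estimate.
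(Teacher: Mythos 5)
Your proposal is correct in substance but follows a genuinely different route from the paper on both main points. For existence and the boundedness of $\Lambda$ and $U_T$, the paper exploits the hypothesis $c-1\in C_0^2(0,L)$: near $x=0$ the speed is $1$, so the constant-coefficient solution $u_0^f(t,x)=h(t-x)$ is explicit and manifestly $H^1$ for $f\in L^2$; a cutoff $\psi$ supported where $c=1$ and the commutator $A=[\p_x^2,\psi]$ reduce the general problem to an interior-source problem with \emph{homogeneous} Neumann data, for which classical $H^1$ well-posedness applies. This sidesteps entirely the hidden trace regularity that is the crux of your argument; your multiplier/Rellich route (or the appeal to \cite{Lasiecka1986}) is valid in 1+1 dimensions, but it is heavier machinery than the paper needs, and you should note that the explicit d'Alembert structure near the boundary is what makes the paper's shortcut available. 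For the continuity of $\mathcal A$, the difference is more interesting: the paper performs a Liouville transformation to the potential form $\p_t^2-\p_{\tilde x}^2+q$ and perturbs $q$, which costs two derivatives of $c$ and yields only $\norm{\Lambda_{c_1}-\Lambda_{c_2}}_Y\le C\norm{c_1-c_2}_{C^2}$; since $X=L^\infty$, concluding continuity from that bound implicitly relies on the uniform $C^2$ bound on $\mathcal D(\mathcal A)$ and an interpolation or compactness step. Your argument perturbs the coefficient $c^{-2}$ of $\p_t^2$ directly and uses transposition against the backward solution to shift the time derivative off the source $\p_t g$, producing the Lipschitz estimate $\norm{\mathcal A(\tilde c)-\mathcal A(c)}_Y\le C\norm{\tilde c-c}_{L^\infty}$ in exactly the topology the theorem is stated in; this is stronger and arguably cleaner than the paper's estimate, at the price of again invoking the $L^2$-Neumann energy bound for the dual problem. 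Do make the density argument explicit when you apply the Green's identity to solutions that are only $H^1$, but this is routine.
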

\begin{proof}Let consider the wave equation (\ref{dartwader}).
When $c=1$ on $M$ we denote the solution by $u_0^f$ and have
$$
u_0^f(t,x) = h(t-x), \quad 
h(s) = \begin{cases}
-\int_0^s f(t) dt, & t > 0,
\\
\qquad 0, & t\le 0.
\end{cases} 
$$
Notice that $f \mapsto u_0^f$ is continuous from $L^2(0,2T)$ to $H^1((0,2T) \times M)$.
Let us now show that the same is true for $f \mapsto u^f$.

We choose $\psi \in C^\infty(M)$ such that $\psi = 1$ near $x=0$
and $c=1$ in the support of $\psi$.
The commutator $A = [\p_x^2, \psi]$ is a first order differential operator,
whence $A u_0^f \in L^2((0,2T) \times M)$ for $f \in L^2(0,2T)$.
Let $w$ be the solution of 
\begin{align*}
&(\p_t^2 - c(x)^2 \p_x^2) w(t,x) = A u_0^f \quad \text{in $(0,2T) \times M$},
\\\nonumber
&\p_x w(t,0) = 0,
\\\nonumber
&w|_{t = 0} = \p_t w|_{t=0} = 0.
\end{align*}
Then $w \in H^1((0,2T) \times M)$, see e.g. \cite {Ladyzhenskaya1985}, and $u^f := \psi u_0^f - w \in H^1((0,2T) \times M)$ is the solution of (\ref{dartwader}).
Indeed, $c^2 \p_x^2 \psi = \psi \p_x^2 + A$, 
where $\psi$ is interpreted as a multiplication operator.
Here we are using the fact that $c=1$ in the support of $\psi$.
Thus
$$
(\p_t^2 - c^2 \p_x^2) u^f
= \psi (\p_t^2 - \p_x^2) u_0^f + A u_0^f - A u_0^f = 0.
$$
As $\psi = 1$ near $x=0$, we see that $u$ satisfies also the boundary conditions in (\ref{dartwader}).

Let us now suppose that $f\in C_0^{\infty} (0,T)$. Let $u^f$ be solution for the boundary value problem in (\ref{dartwader}). Let $c$ be as defined in (\ref{nopeudet}). Let $x\in M$ and we define 
\begin{align}
\label{kanki}
k\in C^2(M),\quad k(x)=exp(\frac{1}{2c(x)})
\end{align}
and 
\begin{align}
\label{bongi}
& G:C^2(M\times (0,2T))\to C(M\times (0,2T)),
\\\nonumber& G(u)=k^{-1}\Big(\p_t^2-c^2\p_x^2\Big)ku=
\Big(\p_t^2-c^2\p_x^2-2c^2k^{-1}\p_xk\p_x-c^2k^{-1}\p_x^2k\Big)u.  
\end{align}
Let $x\in M$ and define  
\begin{align}
\label{bongi2}
\phi (x)= \int_{0}^{x}c({x^\prime})^{-1}d{x^\prime}. 
\end{align}
Let us denote $\wtilde x=\phi (x)$ and define
\begin{align}
\label{kanki2}
\wtilde u^f(\wtilde x,t)=\wtilde u^f(\phi(x),t):=\frac{u^f(x,t)}{k(x)}.
\end{align}
Using (\ref{kanki}), (\ref{bongi}),  (\ref{bongi2}), and (\ref{kanki2}) we see that $\wtilde u^f(\wtilde x,t)$ is a solution of the boundary value problem 
\begin{align}
\label{potentiaal}
&(\p_t^2-\p_{\wtilde x}^2+ q(\wtilde x))\wtilde u^f(\wtilde x,t) = 0, & (\wtilde x,t) \in M\times (0,2T),
\\\nonumber& \p_{\wtilde x} \wtilde u^f(0,t) = e^{\frac{1}{2}}f(t), 
\quad \p_x \wtilde u^f(L,t) = 0, & t \in (0, 2T),
\\\nonumber& \wtilde u^f(x,0) = \p_t \wtilde u^f(x,0) = 0,  & \wtilde x \in M,
\end{align}
where
\begin{align}
\label{kuuukko}
q(\wtilde{x})=-c^2(\phi^{-1}(\wtilde{x}))k^{-1}(\phi^{-1}(\wtilde{x}))\p_x^2k(\phi^{-1}(\wtilde{x})).
\end{align} 
We define $\lambda_qf=\wtilde u|_{\wtilde x=0}$. Let us consider two velocity functions $c_1$ and $c_2$, and let $q_1$ and $q_2$ be the potentials corresponding to $c_1$ and $c_2$ via formula (\ref{kuuukko}) . Using (\ref{kanki2}) we have
\begin{align}
\label{akuankka1}
\norm{\Lambda_{c_1}-\Lambda_{c_2}}_{\mathcal{L} (L^2(0,2T))} \le e^{\frac{1}{2}}\norm{\Lambda_{q_1}-\Lambda_{q_2}}_{\mathcal{L} (L^2(0,2T))}. 
\end{align}
Let us denote by $u^f_{q_1}$ and $u^f_{q_2}$ the two solutions with respect to potentials $q_1$ and $q_2$ for the problem (\ref{potentiaal}). Let us define  
$w(\wtilde x,t)=\wtilde u^f_{q_1}(\wtilde x,t)-\wtilde u^f_{q_2}(\wtilde x,t)$. Then $w$ is the solution of 
\begin{align}
\label{herkulesbaari}
&(\p_t^2-\p_x^2+ q_1(\wtilde x))w(\wtilde x,t) = F(\wtilde x,t), & (\wtilde x,t) \in M\times (0,2T),
\\\nonumber& \p_{\wtilde x} w(0,t) = 0, 
\quad \p_{\wtilde x} w(L,t) = 0, & t \in (0, 2T),
\\\nonumber& w(\wtilde x,0) = \p_t w(\wtilde x,0) = 0,  & x \in M,
\end{align}
where 
\begin{align}
\label{isosika}
F(\wtilde x,t)=(q_1(\wtilde x)-q_2(\wtilde x)) \wtilde u^f_{q_2}(\wtilde x,t).
\end{align}
By \cite{Lasiecka1986} we have an estimate 
 \begin{align}
 \label{kontulankingkong}
\norm{w}_{H^1(M\times (0,2T))} \le C\norm{F}_{L^2 (M\times (0,2T))}.
\end{align}
Let $\hat T= \phi (L)$ and  let $supp (u^f_{q_2})\subset [0,\hat T]$. Using (\ref{isosika}) we have
 \begin{align}
  \label{kontulankingkong2}
\norm{F}_{L^2 (M\times (0,2T))} \le \norm{q_1-q_2}_{L^{\infty}(0,\hat T)} \norm{u^f_{q_2}}_{H^1(M\times (0,2T))}.
\end{align}
Because $f \mapsto \wtilde u^f_{q_2}$ is continuous from $L^2(0,2T)$ to $H^1(M\times (0,2T))$ we have
\begin{align}
\label{röhröh}
\norm{\wtilde u^f_{q_2}}_{H^1(M\times (0,2T))}
 \le C\norm{f}_{L^2(0,2T)}.
\end{align}
Using (\ref{lam}) we see that
\begin{align}
\label{akuankka2}
\norm{\Lambda_{q_1}f-\Lambda_{q_2}f}_{L^2(0,2T)} 
\le C\norm{u^f_{q_1}-u^f_{q_2}}_{H^1(M\times (0,2T))}. 
\end{align} 
Using (\ref{kontulankingkong2}), (\ref{kontulankingkong}), (\ref{röhröh}), and (\ref{akuankka2}) we have
\begin{align}
\label{akuankka3}
\norm{\Lambda_{q_1}-\Lambda_{q_2}}_{\mathcal{L} (L^2(0,2T))} 
\le C\norm{q_1-q_2}_{L^{\infty}(0,T)}. 
\end{align} 
Using (\ref{kuuukko}) we have 
$q(\wtilde{x})=-c^2(x)k^{-1}(x)\p_x^2k(x)|_{x=\phi^{-1}(\wtilde{x})}$.
Let $x\in M$ and define 
\begin{align*}
 h_1(x)=-c_1^2(x)k_1^{-1}(x)\p_x^2k_1(x),\quad h_2(x)=-c_2^2(x)k_2^{-1}(x)\p_x^2k_2(x).
\end{align*} We have
\begin{align*}
& |h_1(x)-h_2(x)|=|c_2^2(x)k_2^{-1}(x)\p_x^2k_2(x)-c_1^2(x)k_1^{-1}(x)\p_x^2k_1(x)|
\\\nonumber&\le|c_1^2(x)-c_2^2(x)||k_1^{-1}(x)||\p_x^2k_1(x)|
\\\nonumber&+|k_1^{-1}(x)-k_2^{-1}(x)||c_2^2(x)||\p_x^2k_1(x)|
\\\nonumber&+|\p_x^2k_1(x)-\p_x^2k_2(x)||c_2^2(x)||k_2^{-1}(x)|
\end{align*}
Using (\ref{kanki}) we can bound each of these tree terms and have 
 \begin{align}
 \label{akuankka3}
\norm{q_1-q_2}_{L^{\infty}(0,\hat T)}\le C\norm{c_1-c_2}_{C^2(0,L)}.
\end{align} 
Now (\ref{akuankka1}), (\ref{akuankka2}), and (\ref{akuankka3}) imply that $\mathcal A$ is continuous.
\end{proof}

\noindent{\bf Acknowledgements.} We thank Samuli Siltanen for inspiring discussions on the regularization on inverse problems. 

L.\ Oksanen was partly supported by 
EPSRC.
J.\ Korpela and M.\ Lassas were  supported by Academy of Finland, grants 273979 and 284715.

\bibliographystyle{abbrv} 
\bibliography{main}

\def\cprime{$'$} \def\cprime{$'$}
\begin{thebibliography}{10}

\bibitem{Anderson2004}
M.~Anderson, A.~Katsuda, Y.~Kurylev, M.~Lassas, and M.~Taylor.
\newblock Boundary regularity for the {R}icci equation, geometric convergence,
  and {G}el'fand's inverse boundary problem.
\newblock {\em Invent. Math.}, 158(2):261--321, 2004.

\bibitem{Belishev1987}
M.~I. Belishev.
\newblock An approach to multidimensional inverse problems for the wave
  equation.
\newblock {\em Dokl. Akad. Nauk SSSR}, 297(3):524--527, 1987.

\bibitem{Belishev1997}
M.~I. Belishev.
\newblock Boundary control in reconstruction of manifolds and metrics (the {BC}
  method).
\newblock {\em Inverse Problems}, 13(5):R1--R45, 1997.

\bibitem{BeKa1989}
M.~I. Belishev and A.~P. Kachalov.
\newblock Methods in the theory of boundary control in an inverse spectral
  problem for an inhomogeneous string.
\newblock {\em Zap. Nauchn. Sem. Leningrad. Otdel. Mat. Inst. Steklov. (LOMI)},
  179(Mat. Vopr. Teor. Rasprostr. Voln. 19):13, 14--22, 187, 1989.

\bibitem{BeKu1987}
M.~I. Belishev and Y.~V. Kuryl{\"e}v.
\newblock A nonstationary inverse problem for the multidimensional wave
  equation ``in the large''.
\newblock {\em Zap. Nauchn. Sem. Leningrad. Otdel. Mat. Inst. Steklov. (LOMI)},
  165(Mat. Vopr. Teor. Rasprostr. Voln. 17):21--30, 189, 1987.

\bibitem{Belishev1992}
M.~I. Belishev and Y.~V. Kurylev.
\newblock To the reconstruction of a {R}iemannian manifold via its spectral
  data ({BC}-method).
\newblock {\em Comm. Partial Differential Equations}, 17(5-6):767--804, 1992.

\bibitem{BeRFi1994}
M.~I. Belishev, V.~A. Ryzhov, and V.~B. Filippov.
\newblock A spectral variant of the {VS}-method: theory and numerical
  experiment.
\newblock {\em Dokl. Akad. Nauk}, 337(2):172--176, 1994.

\bibitem{Bellassoued2011}
M.~Bellassoued and D.~Dos Santos~Ferreira.
\newblock Stability estimates for the anisotropic wave equation from the
  {D}irichlet-to-{N}eumann map.
\newblock {\em Inverse Probl. Imaging}, 5(4):745--773, 2011.

\bibitem{Bingham2008}
K.~Bingham, Y.~Kurylev, M.~Lassas, and S.~Siltanen.
\newblock Iterative time-reversal control for inverse problems.
\newblock {\em Inverse Probl. Imaging}, 2(1):63--81, 2008.

\bibitem{Bissantz2004}
N.~Bissantz, T.~Hohage, and A.~Munk.
\newblock Consistency and rates of convergence of nonlinear {T}ikhonov
  regularization with random noise.
\newblock {\em Inverse Problems}, 20(6):1773--1789, 2004.

\bibitem{Blagovevsvcenskiui1966}
A.~S. Blagove{\v{s}}{\v{c}}enski{\u\i}.
\newblock The inverse problem of the theory of seismic wave propagation.
\newblock In {\em Problems of mathematical physics, {N}o. 1: {S}pectral theory
  and wave processes ({R}ussian)}, pages 68--81. (errata insert). Izdat.
  Leningrad. Univ., Leningrad, 1966.

\bibitem{Blagovevsvcenskiui1969}
A.~S. Blagove{\v{s}}{\v{c}}enski{\u\i}.
\newblock A one-dimensional inverse boundary value problem for a second order
  hyperbolic equation.
\newblock {\em Zap. Nau\v cn. Sem. Leningrad. Otdel. Mat. Inst. Steklov.
  (LOMI)}, 15:85--90, 1969.

\bibitem{Blagovevsvcenskiui1971}
A.~S. Blagove{\v{s}}{\v{c}}enski{\u\i}.
\newblock The inverse boundary value problem of the theory of wave propagation
  in an anisotropic medium.
\newblock {\em Trudy Mat. Inst. Steklov.}, 115:39--56. (errata insert), 1971.

\bibitem{Bukhgeuim1981}
A.~L. Bukhge{\u\i}m and M.~V. Klibanov.
\newblock Uniqueness in the large of a class of multidimensional inverse
  problems.
\newblock {\em Dokl. Akad. Nauk SSSR}, 260(2):269--272, 1981.

\bibitem{Dahl2009}
M.~F. Dahl, A.~Kirpichnikova, and M.~Lassas.
\newblock Focusing waves in unknown media by modified time reversal iteration.
\newblock {\em SIAM J. Control Optim.}, 48(2):839--858, 2009.

\bibitem{Engl1996}
H.~W. Engl, M.~Hanke, and A.~Neubauer.
\newblock {\em Regularization of inverse problems}, volume 375 of {\em
  Mathematics and its Applications}.
\newblock Kluwer Academic Publishers Group, Dordrecht, 1996.

\bibitem{GeLe1951}
I.~M. Gel{\cprime}fand and B.~M. Levitan.
\newblock On the determination of a differential equation from its spectral
  function.
\newblock {\em Izvestiya Akad. Nauk SSSR. Ser. Mat.}, 15:309--360, 1951.

\bibitem{Hanke1997}
M.~Hanke.
\newblock Regularizing properties of a truncated {N}ewton-{CG} algorithm for
  nonlinear inverse problems.
\newblock {\em Numer. Funct. Anal. Optim.}, 18(9-10):971--993, 1997.

\bibitem{Hofmann2007}
B.~Hofmann, B.~Kaltenbacher, C.~P{\"o}schl, and O.~Scherzer.
\newblock A convergence rates result for {T}ikhonov regularization in {B}anach
  spaces with non-smooth operators.
\newblock {\em Inverse Problems}, 23(3):987--1010, 2007.

\bibitem{Hohage2008}
T.~Hohage and M.~Pricop.
\newblock Nonlinear {T}ikhonov regularization in {H}ilbert scales for inverse
  boundary value problems with random noise.
\newblock {\em Inverse Probl. Imaging}, 2(2):271--290, 2008.

\bibitem{Justen2006}
L.~Justen and R.~Ramlau.
\newblock A non-iterative regularization approach to blind deconvolution.
\newblock {\em Inverse Problems}, 22(3):771--800, 2006.

\bibitem{Kabanikhin2005}
S.~I. Kabanikhin, A.~D. Satybaev, and M.~A. Shishlenin.
\newblock {\em Direct methods of solving multidimensional inverse hyperbolic
  problems}.
\newblock Inverse and Ill-posed Problems Series. VSP, Utrecht, 2005.

\bibitem{Kaltenbacher2006}
B.~Kaltenbacher and A.~Neubauer.
\newblock Convergence of projected iterative regularization methods for
  nonlinear problems with smooth solutions.
\newblock {\em Inverse Problems}, 22(3):1105--1119, 2006.

\bibitem{Kaltenbacher2008}
B.~Kaltenbacher, A.~Neubauer, and O.~Scherzer.
\newblock {\em Iterative regularization methods for nonlinear ill-posed
  problems}, volume~6 of {\em Radon Series on Computational and Applied
  Mathematics}.
\newblock Walter de Gruyter GmbH \& Co. KG, Berlin, 2008.

\bibitem{Katchalov1998}
A.~Katchalov and Y.~Kurylev.
\newblock Multidimensional inverse problem with incomplete boundary spectral
  data.
\newblock {\em Comm. Partial Differential Equations}, 23(1-2):55--95, 1998.

\bibitem{KKL2001}
A.~Katchalov, Y.~Kurylev, and M.~Lassas.
\newblock {\em Inverse boundary spectral problems}, volume 123 of {\em Chapman
  \& Hall/CRC Monographs and Surveys in Pure and Applied Mathematics}.
\newblock Chapman \& Hall/CRC, Boca Raton, FL, 2001.

\bibitem{Katchalov2001}
A.~Katchalov, Y.~Kurylev, and M.~Lassas.
\newblock {\em Inverse boundary spectral problems}, volume 123 of {\em Chapman
  \& Hall/CRC Monographs and Surveys in Pure and Applied Mathematics}.
\newblock Chapman \& Hall/CRC, Boca Raton, FL, 2001.

\bibitem{KKLM2004}
A.~Katchalov, Y.~Kurylev, M.~Lassas, and N.~Mandache.
\newblock Equivalence of time-domain inverse problems and boundary spectral
  problems.
\newblock {\em Inverse Problems}, 20(2):419--436, 2004.

\bibitem{Katsuda2007}
A.~Katsuda, Y.~Kurylev, and M.~Lassas.
\newblock Stability of boundary distance representation and reconstruction of
  {R}iemannian manifolds.
\newblock {\em Inverse Probl. Imaging}, 1(1):135--157, 2007.

\bibitem{Kirsch1996}
A.~Kirsch.
\newblock {\em An Introduction to the Mathematical Theory of Inverse Problems}.
\newblock Springer-Verlag New York, Inc., New York, NY, USA, 1996.

\bibitem{KLMS2009}
K.~Knudsen, M.~Lassas, J.~L. Mueller, and S.~Siltanen.
\newblock Regularized {D}-bar method for the inverse conductivity problem.
\newblock {\em Inverse Probl. Imaging}, 3(4):599--624, 2009.

\bibitem{Krein1951}
M.~G. Kre{\u\i}n.
\newblock Solution of the inverse {S}turm-{L}iouville problem.
\newblock {\em Doklady Akad. Nauk SSSR (N.S.)}, 76:21--24, 1951.

\bibitem{Kury1995}
Y.~Kurylev.
\newblock An inverse boundary problem for the {S}chr\"odinger operator with
  magnetic field.
\newblock {\em J. Math. Phys.}, 36(6):2761--2776, 1995.

\bibitem{Kurylev2009}
Y.~Kurylev and M.~Lassas.
\newblock Inverse problems and index formulae for {D}irac operators.
\newblock {\em Adv. Math.}, 221(1):170--216, 2009.

\bibitem{Kurylev2006}
Y.~Kurylev, M.~Lassas, and E.~Somersalo.
\newblock Maxwell's equations with a polarization independent wave velocity:
  direct and inverse problems.
\newblock {\em J. Math. Pures Appl. (9)}, 86(3):237--270, 2006.

\bibitem{Ladyzhenskaya1985}
O.~A. Ladyzhenskaya.
\newblock {\em The boundary value problems of mathematical physics}, volume~49
  of {\em Applied Mathematical Sciences}.
\newblock Springer-Verlag, New York, 1985.
\newblock Translated from the Russian by Jack Lohwater [Arthur J. Lohwater].

\bibitem{Lasiecka1986}
I.~Lasiecka, J.-L. Lions, and R.~Triggiani.
\newblock Nonhomogeneous boundary value problems for second order hyperbolic
  operators.
\newblock {\em J. Math. Pures Appl. (9)}, 65(2):149--192, 1986.

\bibitem{LaOk2014}
M.~Lassas and L.~Oksanen.
\newblock Inverse problem for the {R}iemannian wave equation with {D}irichlet
  data and {N}eumann data on disjoint sets.
\newblock {\em Duke Math. J.}, 163(6):1071--1103, 2014.

\bibitem{LaOk2014b}
M.~Lassas and L.~Oksanen.
\newblock Local reconstruction of a {R}iemannian manifold from a restriction of
  the hyperbolic {D}irichlet-to-{N}eumann operator.
\newblock In {\em Inverse problems and applications}, volume 615 of {\em
  Contemp. Math.}, pages 223--231. Amer. Math. Soc., Providence, RI, 2014.

\bibitem{Liu2012}
S.~Liu and L.~Oksanen.
\newblock A lipschitz stable reconstruction formula for the inverse problem for
  the wave equation.
\newblock {\em Submitted. Preprint arXiv:1210.1094}, Oct. 2012.

\bibitem{LuPeRa2007}
S.~Lu, S.~V. Pereverzev, and R.~Ramlau.
\newblock An analysis of {T}ikhonov regularization for nonlinear ill-posed
  problems under a general smoothness assumption.
\newblock {\em Inverse Problems}, 23(1):217--230, 2007.

\bibitem{Marchenko1950}
V.~A. Mar{\v{c}}enko.
\newblock Concerning the theory of a differential operator of the second order.
\newblock {\em Doklady Akad. Nauk SSSR. (N.S.)}, 72:457--460, 1950.

\bibitem{Mathe2008}
P.~Math{\'e} and B.~Hofmann.
\newblock How general are general source conditions?
\newblock {\em Inverse Problems}, 24(1):015009, 5, 2008.

\bibitem{mueller2012linear}
J.~L. Mueller and S.~Siltanen.
\newblock {\em Linear and nonlinear inverse problems with practical
  applications}, volume~10.
\newblock Siam, 2012.

\bibitem{Nachman1988}
A.~Nachman, J.~Sylvester, and G.~Uhlmann.
\newblock An {$n$}-dimensional {B}org-{L}evinson theorem.
\newblock {\em Comm. Math. Phys.}, 115(4):595--605, 1988.

\bibitem{Lauri2013}
L.~Oksanen.
\newblock Inverse obstacle problem for the non-stationary wave equation with an
  unknown background.
\newblock {\em Comm. Partial Differential Equations}, 38(9):1492--1518, 2013.

\bibitem{Ramlau2008}
R.~Ramlau.
\newblock Regularization properties of {T}ikhonov regularization with sparsity
  constraints.
\newblock {\em Electron. Trans. Numer. Anal.}, 30:54--74, 2008.

\bibitem{Ramlau2006}
R.~Ramlau and G.~Teschke.
\newblock A {T}ikhonov-based projection iteration for nonlinear ill-posed
  problems with sparsity constraints.
\newblock {\em Numer. Math.}, 104(2):177--203, 2006.

\bibitem{Resmerita2005}
E.~Resmerita.
\newblock Regularization of ill-posed problems in {B}anach spaces: convergence
  rates.
\newblock {\em Inverse Problems}, 21(4):1303--1314, 2005.

\bibitem{StUh1998}
P.~Stefanov and G.~Uhlmann.
\newblock Stability estimates for the hyperbolic {D}irichlet to {N}eumann map
  in anisotropic media.
\newblock {\em J. Funct. Anal.}, 154(2):330--358, 1998.

\bibitem{Stefanov2011}
P.~Stefanov and G.~Uhlmann.
\newblock Recovery of a source term or a speed with one measurement and
  applications.
\newblock Mar. 2011.

\bibitem{Sylvester1987}
J.~Sylvester and G.~Uhlmann.
\newblock A global uniqueness theorem for an inverse boundary value problem.
\newblock {\em Ann. of Math. (2)}, 125(1):153--169, 1987.

\bibitem{Tataru1995}
D.~Tataru.
\newblock Unique continuation for solutions to {PDE}'s; between {H}\"ormander's
  theorem and {H}olmgren's theorem.
\newblock {\em Comm. Partial Differential Equations}, 20(5-6):855--884, 1995.

\end{thebibliography}
\end{document}